\theoremstyle{plain}
\newtheorem{theorem}{Theorem}[section]
\newtheorem{corollary}[theorem]{Corollary}
\newtheorem{lemma}[theorem]{Lemma}
\newtheorem{proposition}[theorem]{Proposition}
\theoremstyle{definition}
\newtheorem{rem}[theorem]{Remark}
\DeclareMathOperator{\End}{End}
\DeclareMathOperator{\fix}{Fix}
\newcommand{\bb}[1]{\mathbb{#1}}
\newcommand{\stab}{\mathrm{Stab}}
\newcommand{\im}{\mathrm{im}}
\def\Z{{\mathbb{Z}}}
\def\Q{{\mathbb{Q}}}
\def\C{{\mathbb{C}}}
\def\ps@pprintTitle{%
  \let\@oddhead\@empty
  \let\@evenhead\@empty
  \let\@oddfoot\@empty
  \let\@evenfoot\@oddfoot
}
\title{Smooth quotients of abelian varieties by finite groups}
\author{Robert Auffarth}
\address{R. Auffarth \\Departamento de Matem\'aticas, Facultad de
Ciencias, Universidad de Chile\\ Las Palmeras 3425, \~Nu\~noa, Santiago, Chile}
\email{rfauffar@uchile.cl}
\author{Giancarlo Lucchini Arteche}
\address{G. Lucchini Arteche \\Departamento de Matem\'aticas, Facultad de
Ciencias, Universidad de Chile\\ Las Palmeras 3425, \~Nu\~noa, Santiago, Chile}
\email{luco@uchile.cl}
\thanks{The first author was partially supported by Fondecyt Grant 11180965. The second author was partially supported by Fondecyt Grant 11170016 and PAI Grant 79170034.}
\keywords{Abelian varieties, smooth quotients, automorphisms}
\begin{document}

\begin{abstract}
We give a complete classification of smooth quotients of abelian varieties by finite groups that fix the origin. In the particular case where the action of the group $G$ on the tangent space at the origin of the abelian variety $A$ is irreducible, we prove that $A$ is isomorphic to the self-product of an elliptic curve and $A/G\simeq \bb P^n$. In the general case, assuming $\dim(A^G)=0$, we prove that $A/G$ is isomorphic to a direct product of projective spaces.\\

\noindent\textbf{MSC codes:} 14L30, 14K99.
\end{abstract}

\maketitle

\section{Introduction}

Quotients of abelian varieties by finite groups have appeared in many dif\-fer\-ent contexts and topics of research. For example, in \cite{KL} Koll\'ar and Larsen study groups acting on simple abelian varieties in dimension greater than or equal to 4, and prove that the quotient has canonical singularities and Kodaira dimension 0. This is done in the context of studying quotients of Calabi-Yau varieties by finite groups. In \cite{IL}, Im and Larsen study the existence of rational curves lying on quotients of abelian varieties by finite groups, and they find a condition on the group that implies that rational curves actually exist on the quotient. 

Along another line, in \cite{Yoshi} Yoshihara initiates the study of Galois em\-bed\-dings of varieties, where he asks when a projective variety embedded into projective space admits a finite linear projection that is a Galois morphism. In particular, the existence of a Galois embedding implies that the variety has a finite group of automorphisms such that the quotient variety is isomorphic to projective space. Yoshihara finishes the paper by analyzing the case of abelian surfaces. In \cite{Auff}, the first author generalizes Yoshihara's results to arbitrary dimension, and proves that if the quotient of an abelian variety by a finite group is projective space, then the abelian variety is isogenous to the self-product of an elliptic curve. As a matter of fact, when there is an action of an irreducible finite subgroup of $\mathrm{GL}(T_0(A))$ with Schur index 1 on an abelian variety $A$, then $A$ is isogenous to the self-product of an elliptic curve, as was proven in \cite{PopovZ}. These results are in some sense opposite to the work done by Koll\'ar and Larsen in \cite{KL}.

These examples show that quotients of abelian varieties by finite groups have indeed garnered attention in varied contexts within algebraic geometry. On the other hand, group actions on abelian varieties over $\C$ lead to the study of finite-dimensional complex representations via their universal covering space, and viceversa. In this sense, a classic article by Looijenga relates root systems and self-products of elliptic curves in \cite{Looijenga}. There is also work by Popov \cite{Popov} and Tokunaga-Yoshida \cite{TY} on complex crystallographic reflection groups, which are extensions $\Gamma$ of a finite complex reflection group $G$ by a $G$-stable lattice $\Lambda$ in $\C^n$. In \cite{TY} the authors study the corresponding quotient $\C^n/\Gamma$ for $n=2$ and in \cite{BS}, Bernstein and Schwarzman do the same in arbitrary dimension for complex crystallographic groups of Coxeter type. Note that such quotients correspond to the quotient of the abelian variety $A=\C^n/\Lambda$ by $G$. However, for a given finite complex reflection group $G$, \emph{not every $G$-stable lattice comes from a complex crystallographic reflection group} and hence the study of smooth quotients of abelian varieties remains an open question.\\

The purpose of this paper is to give a full classification of smooth quotients of abelian varieties by finite groups in the particular case in which the group fixes the origin. Our main theorem states the following:

\begin{theorem}\label{classification}
Let $A$ be an abelian variety of dimension $n\geq 3$, and let $G$ be a (non trivial) finite group of automorphisms of $A$ that fix the origin. Then the following conditions are equivalent:

\begin{itemize}
\item[(1)] $A/G$ is smooth and the action of $G$ on $T_0A$ is irreducible.
\item[(2)] $A/G$ is smooth of Picard number 1.
\item[(3)] $A/G\cong\mathbb{P}^n$.
\item[(4)] There exists an elliptic curve $E$ such that $A\cong E^n$ and $(A,G)$ satisfies exactly one of the following:
\begin{enumerate}[label=(\alph*)]
\item $G\cong C^n\rtimes S_n$ where $C$ is a non-trivial (cyclic) subgroup of automorphisms of $E$ that fix the origin; here the action of $C^n$ is coordinatewise and $S_n$ permutes the coordinates.\label{ex1}
\item $G\cong S_{n+1}$ and acts on 
$$A\cong\{(x_1,\ldots,x_{n+1})\in E^{n+1}:x_1+\cdots+x_{n+1}=0\}$$ 
by permutations.\label{ex2}
\end{enumerate}
\end{itemize}
\end{theorem}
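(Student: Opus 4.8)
The plan is to prove the chain $(3)\Rightarrow(2)\Rightarrow(1)\Rightarrow(4)\Rightarrow(3)$, since $(3)\Rightarrow(2)$ is immediate and $(4)\Rightarrow(3)$ reduces to an explicit computation. The conceptual engine throughout is a smoothness criterion that I would isolate as a preliminary lemma: because every $g\in G$ acts linearly on the universal cover $V=T_0A$ and preserves the lattice $\Lambda$ with $A=V/\Lambda$, the quotient $A/G$ is smooth if and only if, for every $v\in V$, the stabilizer $\{g\in G: gv\in v+\Lambda\}$ acts on $T_{[v]}A\cong V$ as a group generated by pseudo-reflections (Chevalley--Shephard--Todd applied analytically at each point). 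Specializing to $v=0$ gives the decisive observation: since $G$ fixes the origin, $\stab_G(0)=G$, so smoothness of $A/G$ already forces $G$ itself to be a complex reflection group on $V$. Under $(1)$ this makes $G$ an \emph{irreducible} complex reflection group preserving a full lattice, i.e. a crystallographic one.

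For $(2)\Rightarrow(1)$ I would argue by contraposition. If $G$ acts reducibly on $V$, then---being a reflection group by the criterion above---it splits as a product $G\cong G_1\times\cdots\times G_k$ with $k\ge 2$, each $G_i$ a reflection group on a summand $V_i$. I would then show this forces the Picard number of $A/G$ to be at least $2$: identifying $\NS(A/G)_\mathbb{Q}\cong\NS(A)_\mathbb{Q}^G$ via pullback and then with the Rosati-symmetric part of the $G$-centralizer in $\End(A)_\mathbb{Q}$, the product decomposition of $G$ produces at least $k$ independent invariant classes (the summand-wise rescalings of an averaged $G$-invariant polarization), so $\rho(A/G)\ge k\ge 2$. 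The delicate point here is rationality---checking that the isotypic projectors are defined over $\mathbb{Q}$ so that the rescaled forms genuinely lie in $\NS(A)_\mathbb{Q}$---which I expect to handle using that the real subspaces $V_i\oplus\bar V_i$ descend to rational sub-$G$-representations of $\Lambda_\mathbb{Q}$.

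The heart is $(1)\Rightarrow(4)$. Having reduced to $G$ an irreducible crystallographic complex reflection group, I would first invoke the cited results: since such a $G$ has Schur index $1$, by \cite{PopovZ} (see also \cite{Auff}) $A$ is isogenous to $E^n$ for an elliptic curve $E$. The main structural step is to upgrade smoothness at the origin to smoothness everywhere and to read off the group: I would show that $A/G$ being smooth at \emph{all} points is equivalent to the affine extension $\Gamma=\Lambda\rtimes G$ being a complex crystallographic reflection group in the sense of \cite{Popov}, i.e. generated by its affine reflections. This is exactly the subtlety flagged in the introduction (not every $G$-stable lattice arises this way), and verifying the equivalence---that the stabilizer condition at every $v$ forces $\Gamma$ to be reflection-generated, and conversely---is where the real work lies. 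Granting it, the classification of irreducible complex crystallographic reflection groups, together with the requirement that the quotient be a \emph{genuine} projective space rather than a weighted one (which smoothness already guarantees), should leave precisely the two families: the imprimitive groups $C^n\rtimes S_n$ on $E^n$, giving case \ref{ex1}, and the symmetric group $S_{n+1}$ acting on the sum-zero subvariety of $E^{n+1}$, giving case \ref{ex2}. I anticipate this matching---pinning down the invariant lattices up to isomorphism so that the isogeny $A\sim E^n$ becomes an isomorphism onto the stated model, and excluding the CM and exceptional reflection groups on crystallographic grounds---to be the main obstacle.

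Finally, $(4)\Rightarrow(3)$ is a direct verification. For \ref{ex1}, I would compute $E^n/(C^n\rtimes S_n)=\bigl((E/C)^n\bigr)/S_n=(\mathbb{P}^1)^n/S_n=\mathrm{Sym}^n\mathbb{P}^1\cong\mathbb{P}^n$, using that $E/C\cong\mathbb{P}^1$ for any nontrivial $C\le\aut(E,0)$. For \ref{ex2}, I would use the Abel--Jacobi description of $\mathrm{Sym}^{n+1}E$ as a $\mathbb{P}^n$-bundle over $\mathrm{Pic}^{n+1}(E)\cong E$ via the summation map; the fiber over $0$ is precisely $A/S_{n+1}$ and is a $\mathbb{P}^n$. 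This closes the cycle.
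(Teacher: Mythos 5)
Your handling of the peripheral implications is essentially sound and close to the paper's: $(3)\Rightarrow(2)$ is trivial, your $(4)\Rightarrow(3)$ verification is the computation of \cite{Auff}, and your contrapositive for $(2)\Rightarrow(1)$ is the paper's own argument run backwards (a non-trivial $G$-stable abelian subvariety $X$ gives $N_X^*\mathcal{L}\in\NS(A)_\Q^G$ with $(N_X^*\mathcal{L})^n=0$ by \cite[Prop.~3.1]{ALR}, hence a class independent of $\mathcal{L}$). One caveat there: your rationality fix is wrong as stated, since a $G$-stable complex subspace $V_i$ (or $V_i\oplus\bar V_i$) need \emph{not} descend to a rational subspace of $\Lambda_\Q$ in general (CM actions give counterexamples). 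What saves the argument is the reflection structure: each summand is spanned by root lines, and each root line is the tangent space of the elliptic curve $\im(1-\sigma)\subset A$, the image of an endomorphism of $A$, hence rational; this is exactly Lemma \ref{lemma desc Ai}.

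The genuine gap is the structural step you propose for $(1)\Rightarrow(4)$. The equivalence you would "show" --- $A/G$ is smooth if and only if $\Gamma=\Lambda\rtimes G$ is a complex crystallographic reflection group, i.e.\ generated by affine reflections --- is false, and false in precisely the direction your plan needs. Reflection-generation of $\Gamma$ does not imply smoothness of $\C^n/\Gamma$: by Bernstein--Schwarzman \cite{BS}, the quotient by a Coxeter-type complex crystallographic group is a \emph{weighted} projective space (weights $1$ together with the marks of the highest root), so for instance the affine Weyl group of $E_8$ acting on $E^8$ for any elliptic curve $E$ is generated by affine reflections yet has a singular quotient. The paper's Table \ref{table detS=1} is a systematic refutation of your equivalence: its rows are exactly Popov's crystallographic reflection groups $[K_{24}]$, $[K_{26}]$, $[F_4]$, $[K_{29}]$, $[K_{31}]$, $[K_{32}]$, $[K_{34}]$, $[E_8]^\alpha$, etc., each exhibited with a point of $A/G$ whose stabilizer is not generated by pseudoreflections. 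So "granting" your equivalence grants a false statement, and the conclusion you draw from it --- that Popov's classification "leaves precisely the two families" --- is unavailable: Popov's list contains many more irreducible crystallographic reflection groups in rank $\geq 3$, and eliminating them is the actual theorem. Your parenthetical "(which smoothness already guarantees)" is circular, since which pairs $(A,G)$ are smooth is exactly what is being classified.

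What fills this hole in the paper is the case-by-case analysis over the Shephard--Todd list that your proposal replaces by the false equivalence: the root-lattice $G$-isogeny $\pi:B\to A$ of Section \ref{sect isogeny}, the determination of all pseudoreflections in $\Delta\rtimes G$ (Lemma \ref{main lemma}), the exclusion of $G(m,p,n)$ with $p\geq 2$ (Proposition \ref{prop Gmpn p not 1}), the proof that the kernel $\Delta$ must vanish (Propositions \ref{prop Sn}, \ref{proposition Delta not diagonal}, \ref{proposition Delta not hyperplanar} --- this is also what upgrades your isogeny $A\sim E^n$ from \cite{PopovZ} to the isomorphisms in $(4)$), and the stabilizer computations for \emph{all} $G$-invariant lattices $\Lambda^0\subseteq\Lambda\subseteq S^{-1}\Lambda^0$ of the sporadic groups (Proposition \ref{prop sporadic}, Tables \ref{table detS=1} and \ref{table detS neq 1}), not only the reflection-generated ones. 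None of this work has a substitute in your proposal, so the implication $(1)\Rightarrow(4)$ remains unproved.
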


The two cases found in item $(4)$ of the above theorem were studied in detail in \cite{Auff}, where it was proven that both examples give projective space as quotients. This gives the proof of $(4)\Rightarrow(3)$. Our theorem shows that these are the only cases that give smooth quotients in dimension $n\geq 3$. Throughout the paper we will refer to these two examples as Example \ref{ex1} and Example \ref{ex2}, respectively.

Note that the case of dimension $n=1$ is obvious: every pair $(A,G)$ gives $\bb P^1$ as a quotient. For $n=2$, according to Yoshihara (cf.~\cite{Yoshi}), this classification was already done by Tokunaga and Yoshida in \cite{TY}. This paper classifies 2-dimensional complex crystallographic reflection groups. How\-ever, as stated above, these do not cover all possible $G$-stable lattices and hence not all possible group actions on abelian surfaces. The classification in this case was thus incomplete, but was recently achieved by P.~Quezada and the authors in \cite{Pablo}. The outcome is that, in the irreducible case, there is only one example different from Examples \ref{ex1} and \ref{ex2} giving a smooth quotient: it is the pair $(A,G)$ with $A=E^2$ for $E=\C/\Z[i]$ and $G$ is the order 16 subgroup of $\mathrm{GL}_2(\Z[i])$ generated by:
\[\left\{\begin{pmatrix} -1 & 1+i \\ 0 & 1\end{pmatrix}\right.,\, \begin{pmatrix} -i & i-1 \\ 0 & i\end{pmatrix},\, \left.\begin{pmatrix} -1 & 0 \\ i-1 & 1\end{pmatrix} \right\},\]
acting on $A$ in the obvious way.\\

An interesting corollary, which was a first motivation for writing this paper is the following:

\begin{corollary}
If $G$ is a finite group that acts on an abelian variety $A$ such that the elements of $G$ fix the origin and $A/G\cong\mathbb{P}^n$, then $A$ is isomorphic to the self-product of an elliptic curve.
\end{corollary}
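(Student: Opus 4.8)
The plan is to deduce the statement directly from Theorem \ref{classification} in the main range of dimensions, and to dispatch the two boundary dimensions $n=1,2$ by hand. Observe first that $A$ itself is never rational, so $G$ must be nontrivial, and since $\mathbb{P}^n$ is smooth the hypothesis $A/G\cong\mathbb{P}^n$ is precisely condition $(3)$ of Theorem \ref{classification} whenever $n\geq 3$. Invoking the equivalence $(3)\Leftrightarrow(4)$ then produces an elliptic curve $E$ together with an isomorphism $A\cong E^n$: in either of the two shapes \ref{ex1} and \ref{ex2} the underlying abelian variety is an honest self-product of $E$. This is exactly the assertion that $A$ is isomorphic to the self-product of an elliptic curve, so the corollary is immediate for all $n\geq 3$.

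For the remaining cases I would argue separately. When $n=1$ there is nothing to prove, since $A$ is then a one-dimensional abelian variety, i.e.\ an elliptic curve, and hence trivially the self-product $E^1$ of itself. When $n=2$ I would appeal to the completed two-dimensional classification of \cite{Pablo} (building on \cite{TY}). The key observation is that $A/G\cong\mathbb{P}^2$ has Picard number $1$, and this forces the representation of $G$ on $T_0A$ to be irreducible: were it reducible (note $\dim A^G=0$, as a positive-dimensional fixed part would obstruct rational connectedness of the quotient), the structural decomposition would split $A/G$ as a nontrivial product of lower-dimensional projective spaces, namely $\mathbb{P}^1\times\mathbb{P}^1$, which has Picard number $2$ and is therefore not isomorphic to $\mathbb{P}^2$. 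In the irreducible regime the classification of \cite{Pablo} exhibits only pairs $(A,G)$ for which $A\cong E^2$, namely the two families \ref{ex1} and \ref{ex2} together with the single exceptional pair with $A=E^2$ and $E=\mathbb{C}/\mathbb{Z}[i]$ recorded after the theorem. In every case $A$ is again a self-product of an elliptic curve.

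The genuine mathematical content of the corollary lies entirely inside Theorem \ref{classification}, which I am assuming; once that is in hand, the deduction is formal. The only points demanding care are twofold: first, confirming that landing on $\mathbb{P}^n$ (rather than on a more general smooth quotient) pins down the \emph{irreducible} case, which is what upgrades the conclusion "isogenous to a self-product" available from \cite{Auff} to an honest \emph{isomorphism} $A\cong E^n$; and second, ensuring that the low-dimensional cases $n\leq 2$, which fall outside the hypotheses of Theorem \ref{classification}, are covered by the external classification. I expect the $n=2$ bookkeeping to be the only mildly delicate step, as it requires checking that every irreducible example listed in \cite{Pablo} is an isomorphic (and not merely isogenous) self-product of an elliptic curve.
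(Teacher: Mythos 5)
Your proof is correct and follows essentially the same route as the paper, which presents this corollary as an immediate consequence of the implication $(3)\Rightarrow(4)$ of Theorem \ref{classification}, with the cases $n=1$ (trivial) and $n=2$ (the classification of \cite{Pablo}, all of whose irreducible smooth-quotient examples have $A\cong E^2$) covered exactly as in the paper's preceding discussion. Your extra step forcing irreducibility in dimension $2$ is sound, though you could equally cite the paper's own proposition that a smooth quotient of Picard number $1$ has irreducible analytic representation, which is proved without any restriction on the dimension.
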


The general case is quickly reduced to the irreducible case.

\begin{theorem}[Cf.~Theorem \ref{thm red irred}]\label{thm red irred intro}
Let $G$ be a group that acts by algebraic homomorphisms on an abelian variety $A$ such that $A/G$ is smooth. Assume that $\dim(A^G)=0$. Then $G=\prod_{i=1}^r G_i$, $A=\prod_{i=1}^r A_i$ and each pair $(A_i,G_i)$ satisfies the equivalent conditions from Theorem \ref{classification} above.
\end{theorem}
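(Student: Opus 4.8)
The plan is to transport the decomposition of $G$ as a complex reflection group to a decomposition of the abelian variety itself. Write $V=T_0A$ and let $\Lambda\subset V$ be the lattice with $A=V/\Lambda$; since every element of $G$ fixes the origin and is an algebraic homomorphism, $G$ acts $\C$-linearly on $V$ while preserving $\Lambda$. As $G$ stabilizes $0$, the image of the origin in $A/G$ has a neighborhood analytically isomorphic to a neighborhood of the image of $0$ in $V/G$. Since $A/G$ is smooth, $V/G$ is smooth, so by the Chevalley--Shephard--Todd theorem $G$ acts on $V$ as a finite complex reflection group. Moreover the hypothesis $\dim(A^G)=0$ says exactly that $V^G=0$, i.e. the reflection representation $V$ has no nonzero pointwise-fixed subspace.

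Next I would invoke the structure theory of finite complex reflection groups: such a group decomposes as a direct product $G=\prod_{i=1}^r G_i$ compatibly with an orthogonal (for an invariant Hermitian form) decomposition $V=\bigoplus_{i=1}^r V_i$, where each $G_i$ acts irreducibly on $V_i$ and trivially on $V_j$ for $j\ne i$; the trivial summand is absent precisely because $V^G=0$. Setting $H_i:=\prod_{j\ne i}G_j$ one checks $V^{H_i}=V_i$, so $A_i:=(A^{H_i})^0$ is an abelian subvariety with $T_0A_i=V_i$, stable under $G$, with $G_i$ acting and $H_i$ acting trivially. The $V_i$ span $V$, so the summation map $\mu\colon\prod_{i=1}^r A_i\to A$ is an isogeny, equivariant for $G=\prod_i G_i$ acting factorwise; hence $\prod_i(A_i/G_i)=(\prod_iA_i)/G$ maps finitely and surjectively onto $A/G$.

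The crux is to upgrade this isogeny to an isomorphism, equivalently to show that $\Lambda=\bigoplus_i(\Lambda\cap V_i)$, or that $A_i\cap\sum_{j\ne i}A_j=0$ for each $i$. This does \emph{not} follow from the reflection structure alone: one can exhibit a lattice $\Lambda$ preserved by a product $G=G_1\times G_2$ of reflection groups with $\Lambda\supsetneq(\Lambda\cap V_1)\oplus(\Lambda\cap V_2)$, and this is exactly where the global smoothness of $A/G$ --- and not merely smoothness at the origin --- must enter. I would argue by contradiction: if the lattice does not split then $K:=\ker\mu\ne 0$ and $A=(\prod_iA_i)/K$, so $A/G$ is a quotient of $\prod_i(A_i/G_i)$ by the group generated by $\prod_iG_i$ together with the translations by the nonzero elements of $K$. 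Composing a group element with such a translation can produce new fixed points, and the non-split structure forces a torsion point $p\in A$ whose stabilizer $G_p$ acts on $T_pA\cong V$ through an element that is not a pseudo-reflection (in the basic example, a $-\mathrm{id}$ on one factor appears at a point such as a suitable $2$-torsion point). Chevalley--Shephard--Todd at $p$ then contradicts smoothness of $A/G$ at the image of $p$. Producing this bad point in general --- controlling precisely which stabilizers can occur at torsion points of a non-split lattice --- is the main obstacle and is where the bulk of the work lies.

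Once $\mu$ is an isomorphism the conclusion follows: $A=\prod_iA_i$, $G=\prod_iG_i$, and $A/G=\prod_i(A_i/G_i)$. A product of varieties is smooth if and only if each factor is, so each $A_i/G_i$ is smooth; and by construction $G_i$ acts irreducibly on $T_0A_i=V_i$. Thus each pair $(A_i,G_i)$ satisfies condition $(1)$ of Theorem \ref{classification}, and hence all of its equivalent conditions (the factors of dimension $\le 2$ being covered by the already-known low-dimensional classification).
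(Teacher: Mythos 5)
Your reduction steps are correct and essentially the paper's: linearizing at the origin, invoking Chevalley--Shephard--Todd to see $G$ as a finite complex reflection group on $V=T_0A$ with $V^G=0$, decomposing $G=\prod_i G_i$ and $V=\bigoplus_i V_i$, building abelian subvarieties $A_i$ with $T_0A_i=V_i$ (your $(A^{H_i})^0$ coincides with the paper's $\sum_{\tau\in G}\tau(E_\sigma)$), and forming the $G$-equivariant isogeny $\mu\colon\prod_iA_i\to A$. But your proof stops exactly where the theorem's real content lies: you assert that a nonzero kernel $K=\ker\mu$ ``forces a torsion point whose stabilizer contains a non-pseudoreflection,'' concede that producing such a point ``is the main obstacle and is where the bulk of the work lies,'' and never do it. That is a genuine gap, not a deferred detail.

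The paper closes this gap with three observations your sketch is missing, and they make the anticipated obstacle dissolve. First, $K$ is fixed \emph{pointwise} by $G$: each coordinate $a_i$ of $(a_1,\ldots,a_r)\in K$ lies in $A_i$, on which every $G_j$ with $j\neq i$ acts trivially, and also equals $-\sum_{j\neq i}a_j\in\sum_{j\neq i}A_j$, on which $G_i$ acts trivially. Consequently $K\times G$ acts on $\prod_iA_i$ as a \emph{direct} product and $A/G\cong X/K$, where $X:=\prod_i(A_i/G_i)$ and $K$ acts by translations factor by factor. (The paper also proves each $A_i/G_i$ is smooth \emph{before} knowing the splitting, via $\stab_{G_i}(x)=\stab_G(x)\cap G_i$; this is needed to apply Chevalley--Shephard--Todd to $X/K$ at all.) Second, every nontrivial $\bar a\in K$ has a fixed point on $X$: by Shephard--Todd (their Theorem~5.4) each irreducible reflection group $G_i$ contains an element $\tau$ with no eigenvalue $1$, so $1-\tau\colon A_i\to A_i$ is surjective, and solving $x_i-\tau(x_i)=a_i$ gives a point of $A_i/G_i$ fixed by translation by $a_i$. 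Third, no nontrivial $\bar a\in K$ acts as a pseudoreflection on $X$: its fixed locus is the \emph{product} of the fixed loci of the $a_i$, so for it to be a divisor one would need $a_i=0$ for all but one index, and then $\sum_ia_i=0$ forces $\bar a=0$. Combining these: if $K\neq 0$, some point of $X$ has a nontrivial stabilizer in $K$ containing no pseudoreflections, hence not generated by pseudoreflections, and Chevalley--Shephard--Todd contradicts the smoothness of $X/K=A/G$. The argument is thus purely group-theoretic on $X$ --- the only nontrivial input is the existence of an eigenvalue-$1$-free element in each $G_i$ --- and no analysis of which stabilizers occur at torsion points of a non-split lattice is ever required.
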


When $A^G$ has positive dimension, the situation does not necessarily split, but we can still describe the quotient $A/G$ as a fibration over an abelian variety with smooth fibers that are isomorphic to the quotients in Theorem \ref{thm red irred intro}. Actually, we prove in the general case that $A/G$ is smooth if and only if $P_G /G$ is smooth, where $P_G $ is the complementary abelian subvariety of the connected component of $A^G$ that contains 0, cf. Proposition \ref{And smooth}. The notation $P_G $ comes from the fact that in the case that $A$ is the Jacobian of a curve $X$ and $G$ is a group of automorphisms of $X$, $P_G $ is the Prym variety associated to the morphism $X\to X/G$.\\

As an application of our main theorems, we expect to give in a subsequent paper a classification of quotients of principally polarized abelian varieties by groups preserving the divisor class of the polarization. This will be applicable to the specific case of Jacobian varieties with group action coming from an action on the corresponding curve. As a final application, B.~Lim pointed out to us that our classification would be a key ingredient in solving a conjecture by Polishchuk and Van den Bergh (cf.~\cite[Conj,~A]{PVdB}) on semiorthogonal decompositions of categories of equivariant coherent sheaves in the case of abelian varieties.\\

The structure of this paper is as follows: In Section \ref{sec group actions}, we cover some basic properties of abelian varieties with a finite group action and smooth quotient. In particular, we prove in Section \ref{generalities} the implication $(2)\Rightarrow (1)$ from Theorem \ref{classification}, while Section \ref{sect isogeny} is dedicated to the study of $G$-equivariant isogenies in this context, which are used in the sequel. In Section \ref{sec red irred} we prove Theorem \ref{thm red irred intro} and we briefly look at the ultimate general case in which $A^G$ may have positive dimension. Section \ref{reflectiongroup} is dedicated to the proof of $(1)\Rightarrow (4)$ (note that $(3)\Rightarrow (2)$ is evident and $(4)\Rightarrow (3)$ was established in \cite{Auff}, so this concludes the proof of Theorem \ref{classification}). This is the heart of the article and therefore its longest and most technical part. Here we use Shephard-Todd's classification of irreducible complex reflection groups in order to study them case by case. The case of the symmetric group $S_{n}$ is studied in Section \ref{sec Sn} and the infinite family of groups $G(m,p,n)$ for $m\geq 2$ is studied in Section \ref{sec Gmpn}. Finally, Section \ref{sec sporadic} is dedicated to the remaining sporadic cases.\\

\noindent\textit{Acknowledgements:} We would like to thank Anita Rojas and Giancarlo Urz\'ua for interesting discussions and Antonio Behn for help with the computer program SageMath.

\section{Groups acting on abelian varieties with smooth quotient}\label{sec group actions}

\subsection{Generalities}\label{generalities}
Let $A$ be an abelian variety of dimension $n$ and let $G$ be a group of automorphisms of $A$ that fix the origin, such that the quotient variety $A/G$ is smooth. By the Chevalley-Shephard-Todd Theorem, the stabilizer in $G$ of each point in $A$ must be generated by pseudoreflections; that is, elements that fix a divisor pointwise, such that the divisor passes through the point. In particular, $G$ is generated by pseudoreflections and $G$ acts on the tangent space at the origin $T_0(A)$ (this is the analytic representation). In this context, a pseudoreflection is an element that fixes a hyperplane pointwise. We will often abuse notation and display $G$ as either acting on $A$ or $T_0(A)$; it will be clear from the context which action we are considering. 

In what follows, let $\mathcal{L}$ be a fixed $G$-invariant polarization on $A$ (take the pullback of an ample class on $A/G$, for example). For $\sigma$ a pseudoreflection in $G$ of order $r$, define
\begin{align*}
D_\sigma&:=\im(1+\sigma+\cdots+\sigma^{r-1}),\\
E_\sigma&:=\im(1-\sigma).
\end{align*}
These are both abelian subvarieties of $A$.

\begin{lemma}\label{lemma Esigma Dsigma}
We have the following:
\begin{itemize}
\item[1.] $D_\sigma$ is the connected component of $\fix(\sigma):=\ker(1-\sigma)$ that contains 0 and $E_\sigma$ is the complementary abelian subvariety of $D_\sigma$ with respect to $\mathcal{L}$. In particular, $D_\sigma$ is a divisor and $E_\sigma$ is an elliptic curve.
\item[2.] $\sigma$ acts on $E_\sigma$ and hence $r\in\{2,3,4,6\}$.
\item[3.] For $a\not\equiv 0\pmod r$, $E_{\sigma^a}=E_{\sigma}$ and $D_{\sigma^a}=D_{\sigma}$. 
\item[4.] $D_\sigma\cap E_\sigma$ consists of $2$-torsion points for $r=2,4$, of $3$-torsion points for $r=3$ and $D_\sigma\cap E_\sigma=0$ for $r=6$.
\end{itemize}
\end{lemma}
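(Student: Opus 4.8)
The backbone of the argument is the analytic representation of $\sigma$ on $T_0A$. Since $\sigma$ is a pseudoreflection of order $r$, this representation is diagonalizable with eigenvalue $1$ of multiplicity $n-1$ (the fixed hyperplane, call it $V_1$) and a single remaining eigenvalue $\zeta$; as $\sigma$ has order $r$ and the analytic representation is faithful, $\zeta$ is a primitive $r$-th root of unity, spanning a line $V_\zeta$. Writing $N:=1+\sigma+\cdots+\sigma^{r-1}$, a computation on eigenvectors shows that $N$ acts as multiplication by $r$ on $V_1$ and by $1+\zeta+\cdots+\zeta^{r-1}=0$ on $V_\zeta$, while $1-\sigma$ acts as $0$ on $V_1$ and as the nonzero scalar $1-\zeta$ on $V_\zeta$. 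Since the tangent space of $\im(f)$ is the image of the analytic representation of $f$, this already gives $T_0D_\sigma=V_1$ and $T_0E_\sigma=V_\zeta$, so $\dim D_\sigma=n-1$ and $\dim E_\sigma=1$.

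For item 1, the identity $\sigma N=N$ shows $D_\sigma=\im(N)\subseteq\fix(\sigma)$; as $D_\sigma$ is connected and contains $0$, it lies in $\fix(\sigma)^0$, and since $T_0D_\sigma=V_1=T_0\fix(\sigma)^0$ and an abelian subvariety is determined by its tangent space at the origin, $D_\sigma=\fix(\sigma)^0$. To identify $E_\sigma$ as the complementary abelian subvariety I would invoke the $G$-invariance of $\cal L$: its Hermitian form $H$ is then $\sigma$-invariant, so $\sigma$ is unitary and its eigenspaces for the distinct eigenvalues $1$ and $\zeta$ are $H$-orthogonal. Hence $T_0E_\sigma=V_\zeta=V_1^{\perp}=(T_0D_\sigma)^{\perp}$; since the complementary abelian subvariety is the unique abelian subvariety whose tangent space is $(T_0D_\sigma)^{\perp}$, it must equal $E_\sigma$. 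In particular $D_\sigma$ is a divisor and $E_\sigma$ an elliptic curve.

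Items 2 and 3 are then formal. From $\sigma(1-\sigma)=(1-\sigma)\sigma$ we get $\sigma(E_\sigma)\subseteq E_\sigma$, so $\tau:=\sigma|_{E_\sigma}$ is an automorphism of the elliptic curve $E_\sigma$ fixing $0$; it acts on $T_0E_\sigma=V_\zeta$ as $\zeta$, hence has order exactly $r$. As the automorphism group of an elliptic curve fixing the origin is cyclic of order $2$, $4$ or $6$, the possible element orders are $2,3,4,6$ (the value $r=1$ being excluded since $\zeta\neq 1$), which is item 2. For item 3, when $a\not\equiv 0\pmod r$ the element $\sigma^a$ is again a pseudoreflection whose nontrivial eigenvalue $\zeta^a\neq 1$ spans the same line $V_\zeta$ and whose fixed hyperplane is again $V_1$; repeating the tangent-space computation gives $T_0E_{\sigma^a}=V_\zeta=T_0E_\sigma$ and $T_0D_{\sigma^a}=V_1=T_0D_\sigma$, so uniqueness of abelian subvarieties by their tangent spaces yields $E_{\sigma^a}=E_\sigma$ and $D_{\sigma^a}=D_\sigma$.

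The last item is the only real computation, but it reduces to a finite check in $\End(E_\sigma)$. Any $x\in D_\sigma\cap E_\sigma$ lies in $\fix(\sigma)$, so $(1-\tau)(x)=0$ and thus $D_\sigma\cap E_\sigma\subseteq\ker(1-\tau)$, where now $1-\tau$ is an endomorphism of $E_\sigma$. Because $\End(E_\sigma)\hookrightarrow\End(T_0E_\sigma)=\C$ is injective and $\tau$ acts as $\zeta$, the endomorphism $\tau$ satisfies the cyclotomic relation $\Phi_r(\tau)=0$, and I would then split into the four cases: for $r=2$, $\tau=-1$ and $1-\tau=[2]$, so $\ker(1-\tau)=E_\sigma[2]$; for $r=4$, $\tau^2=-1$ gives $(1+\tau)(1-\tau)=[2]$, whence $\ker(1-\tau)\subseteq E_\sigma[2]$; for $r=3$, $\tau^2+\tau+1=0$ gives $(2+\tau)(1-\tau)=[3]$, whence $\ker(1-\tau)\subseteq E_\sigma[3]$; and for $r=6$, $\tau^2-\tau+1=0$ gives $\tau(1-\tau)=[1]$, so $1-\tau$ is an automorphism and $\ker(1-\tau)=0$. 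This yields exactly the stated torsion conclusions. I expect the only genuinely delicate point to be the complementarity in item 1, where the hypothesis that $\cal L$ be $G$-invariant enters through the unitarity of $\sigma$; the remaining items are driven by the splitting $T_0A=V_1\oplus V_\zeta$ and the principle that abelian subvarieties are pinned down by their tangent spaces at the origin.
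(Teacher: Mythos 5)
Your proof is correct, and it reaches the paper's landmarks (the Hermitian form of $\mathcal{L}$ for complementarity, the action of $\sigma$ on $E_\sigma$ for item 2) by a somewhat different route, so a comparison is worthwhile. You organize everything around the eigenspace decomposition $T_0(A)=V_1\oplus V_\zeta$ and the principle that an abelian subvariety is determined by its tangent space at the origin; the paper instead argues with endomorphism identities at the level of points: for item 1 it notes that $rx=(1+\sigma+\cdots+\sigma^{r-1})(x)\in D_\sigma$ for any $x\in\ker(1-\sigma)$, so the two have the same dimension and connectedness finishes, and for item 3 it compares the irreducible divisors $D_\sigma\subseteq D_{\sigma^a}$ directly. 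The genuine divergence is item 4: the paper observes that $rx=(1-\sigma)(x)+(1+\sigma+\cdots+\sigma^{r-1})(x)=0$ for $x\in D_\sigma\cap E_\sigma$, so the intersection is $r$-torsion, and then bootstraps via item 3 applied to $\sigma^2$ and $\sigma^3$ (for $r=6$ the intersection is simultaneously $2$- and $3$-torsion, hence trivial); you instead factor $[2]$, $[3]$ and $[1]$ through $1-\tau$ in $\End(E_\sigma)$ using the cyclotomic relation $\Phi_r(\tau)=0$. Both are complete: the paper's trick is shorter and uniform in $r$, while yours is self-contained, localizes the computation in $\End(E_\sigma)$, and explains structurally why $r=6$ gives trivial intersection ($1-\tau$ is a unit there). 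The one step you should spell out is the equality $T_0\fix(\sigma)^0=V_1$: the inclusion $\subseteq$ holds because $\fix(\sigma)^0\subseteq\ker(1-\sigma)$ forces its tangent space into $\ker(1-d\sigma)=V_1$, and $\supseteq$ follows from $D_\sigma\subseteq\fix(\sigma)^0$; this is routine, but it is needed before invoking determination by tangent spaces.
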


\begin{proof}
Since
\[(1+\sigma+\cdots+\sigma^{r-1})(1-\sigma)=(1-\sigma)(1+\sigma+\cdots+\sigma^{r-1})=1-\sigma^r=0,\]
we see that $D_\sigma\subset\ker(1-\sigma)$ and $E_\sigma\subset\ker(1+\sigma+\cdots+\sigma^{r-1})$. If $x\in\ker(1-\sigma)$, then 
$$rx=x+\sigma(x)+\cdots+\sigma^{r-1}(x)=(1+\sigma+\cdots+\sigma^{r-1})(x)\in D_\sigma,$$
and so after possibly adding an $r$-torsion point to $x$ we obtain that it lies in $D_\sigma$. Therefore both spaces are of the same dimension and, since $D_\sigma$ is irreducible, we get that it corresponds to the connected component containing 0.

To show that $E_\sigma$ is the complementary abelian subvariety of $D_\sigma$, let $H$ be the first Chern class of $\mathcal{L}$, seen as a Hermitian form $H$ on $T_0(A)=\mathbb{C}^n$. Then, since $\sigma$ preserves the numerical class of $\mathcal{L}$, we have that $\sigma^tH=H\sigma^{-1}$. Hence
$$\left(\sum_{i=0}^{r-1}\sigma^i\right)^tH(I_n-\sigma)=H\left(\sum_{i=0}^{r-1}\sigma^{-i}\right)(I_n-\sigma)=0.$$
This shows that the vector subspaces of $T_0(A)$ induced by $D_\sigma$ and $E_\sigma$ are orthogonal with respect to $H$; i.e.~they are complementary abelian subvarieties. This proves 1.

Since $\sigma$ and $(1-\sigma)$ clearly commute, we see that $\sigma(E_\sigma)=E_\sigma$ by definition. This implies immediately that $r\in\{2,3,4,6\}$. This proves 2. For the third assertion, we know that both $D_\sigma$ and $D_{\sigma^a}$ are irreducible divisors. But clearly $\ker(1-\sigma^a)\supset\ker(1-\sigma)$ and hence $D_\sigma=D_{\sigma^a}$. Complementarity implies then that $E_\sigma=E_{\sigma^a}$. Finally, note that since $D_\sigma\subset\ker(1-\sigma)$ and $E_\sigma\subset\ker(1+\sigma+\cdots+\sigma^{n-1})$, for every $x\in D_\sigma\cap E_\sigma$ we have
\[rx=x-\sigma(x)+x+\sigma(x)+\cdots+\sigma^{r-1}(x)=(1-\sigma)(x)+(1+\sigma+\cdots+\sigma^{r-1})(x)=0.\]
This proves that $D_\sigma\cap E_\sigma$ consists of $r$-torsion points. Using the third assertion for $a=2,3$ we prove 4.
\end{proof}

We are now in a position to prove that $(2)\Rightarrow(1)$ in Theorem \ref{classification}; the proof goes along the lines of \cite[Rem.~2.1]{Auff}.

\begin{proposition}
Let $G$ be a finite group acting on an abelian variety $A$ via algebraic homomorphisms. Assume that $A/G$ is smooth and the Picard number of $A/G$ is 1. Then the analytic representation of $G$ is irreducible.
\end{proposition}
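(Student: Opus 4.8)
The plan is to prove the contrapositive: if the analytic representation of $G$ on $T_0(A)$ is reducible, then the Picard number of $A/G$ is at least $2$. The bridge between the two sides is the identity
\[\rho(A/G)=\dim_{\Q}\bigl(\NS(A)\otimes\Q\bigr)^{G}.\]
Since $\pi\colon A\to A/G$ is finite and $A/G$ is smooth (hence normal), pullback lands in the invariants $(\NS(A)\otimes\Q)^{G}$ because $\pi\circ g=\pi$ for every $g\in G$; injectivity of $\pi^{*}$ follows from $\pi_{*}\pi^{*}=\deg(\pi)\cdot\mathrm{id}$, while a suitable power of any $G$-invariant class is $G$-linearizable and therefore descends along $\pi$, giving surjectivity onto the invariants. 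Thus it suffices to exhibit two linearly independent classes in $(\NS(A)\otimes\Q)^{G}$.

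Assume then $T_0(A)=V_1\oplus V_2$ with $V_1,V_2$ nonzero $G$-invariant subspaces which, replacing $V_2$ by the orthogonal complement of $V_1$ as in the proof of Lemma \ref{lemma Esigma Dsigma}, we may take orthogonal with respect to the Hermitian form $H$ of $\mathcal{L}$. The key point is to upgrade this \emph{linear} splitting to a splitting of $A$ by honest $G$-stable abelian subvarieties, and here the hypothesis that $G$ is generated by pseudoreflections is essential. Each generating pseudoreflection $\sigma$ has $\mathrm{rank}(1-\sigma)=1$, and since $1-\sigma$ preserves $V_1\oplus V_2$, the line $\im(1-\sigma)=T_0(E_\sigma)$ lies entirely in $V_1$ or in $V_2$; by Lemma \ref{lemma Esigma Dsigma} the corresponding $E_\sigma$ is an elliptic curve sitting inside $A$. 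Summing those $E_\sigma$ whose tangent line lies in $V_1$ produces an abelian subvariety $B_1\subseteq A$, and it is $G$-stable because conjugation permutes the $E_\sigma$ and preserves the condition $T_0(E_\sigma)\subseteq V_1$ (as $V_1$ is $G$-invariant).

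One then separates two cases. If both summands contain the tangent line of some generating pseudoreflection, then $B_1$ and the analogous $B_2$ are both nonzero and proper with $B_1+B_2=A$; otherwise one summand, say $V_2$, is fixed pointwise by every generating pseudoreflection, hence by all of $G$, so that $V_2\subseteq T_0(A^{G})$ and one instead takes $B_1:=(A^{G})^{0}$, which is nonzero, proper (as $G$ acts faithfully), and $G$-stable. Either way we obtain a nonzero proper $G$-stable abelian subvariety $B_1$ whose $\mathcal{L}$-complement $B_1^{c}$ is again $G$-stable, so that the addition map $B_1\times B_1^{c}\to A$ is a $G$-equivariant isogeny. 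Restricting $\mathcal{L}$ to each factor yields $G$-invariant polarizations, i.e.\ nonzero classes in $(\NS(B_1)\otimes\Q)^{G}$ and $(\NS(B_1^{c})\otimes\Q)^{G}$, which correspond under the isogeny to two linearly independent $G$-invariant classes on $A$. This forces $\rho(A/G)\geq 2$ and completes the contrapositive.

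The main obstacle is precisely the passage from the linear $G$-decomposition of $T_0(A)$ to a decomposition of $A$ into $G$-stable abelian subvarieties: a generic $G$-invariant subspace of $T_0(A)$ is not the tangent space of an abelian subvariety, and it is only the pseudoreflection structure, together with Lemma \ref{lemma Esigma Dsigma} realizing each reflection direction as an elliptic curve already contained in $A$, that guarantees the splitting is defined over the lattice. The remaining technical input, the surjectivity of $\pi^{*}$ onto $(\NS(A)\otimes\Q)^{G}$, is standard for finite quotients of smooth varieties and is the point where normality of $A/G$ enters.
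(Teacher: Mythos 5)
Your argument is correct in substance, and it is essentially the paper's proof run in contrapositive form, with one genuinely different ingredient worth comparing. Both proofs rest on the same two pillars: the identification $\NS(A/G)_\Q\cong(\NS(A)_\Q)^G$, and the use of the elliptic curves $E_\sigma$ of Lemma \ref{lemma Esigma Dsigma} to pass between $G$-stable subspaces of $T_0(A)$ and $G$-stable abelian subvarieties, with the degenerate case of a pointwise-fixed summand handled through the fixed locus $A^G$ in both texts. Where you differ is in how the link ``nonzero proper $G$-stable abelian subvariety $\Rightarrow\rho(A/G)\geq 2$'' is established: the paper assumes $\rho(A/G)=1$ and derives a contradiction via the norm endomorphism $N_X$, whose pullback $N_X^*\mathcal{L}$ would be a nonzero rational multiple of $\mathcal{L}$ and hence of nonzero top self-intersection, contradicting the criterion of \cite[Prop.~3.1]{ALR}; you instead form the $G$-equivariant isogeny $B_1\times B_1^{c}\to A$ and exhibit two visibly independent $G$-invariant classes. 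Your version is more self-contained (no appeal to the ALR criterion) and isolates exactly the easy direction of that equivalence, which is all the proposition needs. Two small repairs: first, define $B_1$ as the sum of $E_\sigma$ over \emph{all} pseudoreflections $\sigma\in G$ with $\im(1-\sigma)\subseteq V_1$, not just the generating ones; your justification of $G$-stability (``conjugation permutes the $E_\sigma$'') requires the index set to be closed under conjugation, and a conjugate of a generating pseudoreflection need not be a generator. With that change the dichotomy also reads correctly: either each $V_i$ contains the line of some pseudoreflection, or every pseudoreflection fixes (say) $V_2$ pointwise, whence $G$ does too and your second case applies. Second, the side claim $B_1+B_2=A$ is not needed and is not true in general (a trivial subrepresentation could hide inside $V_1$); nontriviality and properness of $B_1$ are what your argument actually uses, and those are fine.
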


\begin{proof}
Assume that $A/G$ is of Picard number 1. We will first show that $G$ does not leave a non-trivial abelian subvariety invariant. Indeed, let $X\subseteq A$ be an abelian subvariety on which $G$ acts, and let $N_X\in\mbox{End}(A)$ be its norm endomorphism with respect to some fixed $G$-invariant polarization $\mathcal{L}$ ($N_X$ on tangent spaces is just the orthogonal projection onto the linear subspace that defines $X$ with respect to the first Chern class of $\mathcal{L}$). Now 
\[N_X^*\mathcal{L}\in\mbox{NS}(A)_\mathbb{Q}^G\cong\mbox{NS}(A/G)_\mathbb{Q}\cong\mathbb{Q},\]
where the subscript $\mathbb{Q}$ indicates that we extended scalars to $\bb Q$. Since $\mathcal{L}\in\mbox{NS}(A)_\mathbb{Q}^G$, we have that $N_X^*\mathcal{L}$ is a rational multiple of $\mathcal{L}$ and therefore the self-intersection number $(N_X^*\mathcal{L})^n$ is non zero. However, by \cite[Prop.~3.1]{ALR}, if $X$ is non-trivial then this number must be zero. Therefore $X$ must be trivial.

Now let $W$ be a $G$-stable linear subspace of $T_0(A)$, and let $\sigma\in G$ be a pseudoreflection. Since the image of $1-\sigma$ is an elliptic curve on $A$ induced, say, by a linear subspace $\langle z_0\rangle\leq T_0(A)$, we have that for every $z\in W$, $(1-\sigma)(z)=\lambda_z z_0$ for some $\lambda_z\in\mathbb{C}$.

If $\lambda_z\neq 0$ for some $z\in W$, then $z_0\in W$. Now, since the translates of $z_0$ by $G$ all lie in $W$ and $\sum_{\tau\in G}\tau(E_\sigma)=A$ by the previous discussion, we have that $W=T_0(A)$. 

Assume now that $\lambda_z=0$ for every $z\in W$ and every pseudoreflection $\sigma\in G$. In particular, $W$ is fixed by every $\sigma$, and since these generate the group, we have that $W$ is fixed pointwise by $G$. Now since $G$ does not fix pointwise any non-trivial abelian subvariety of $A$, we have that
$$\bigcap_{\tau\in G}\ker(1-\tau)\subseteq A$$
is finite and so its preimage in $T_0(A)$ is discrete. However $W$ is contained in this preimage, and so it must be trivial. 
\end{proof}

\subsection{$G$-equivariant isogenies}\label{sect isogeny}
We will consider now a new abelian variety $B$ equipped with a $G$-equivariant isogeny to $A$, which we will call a $G$-isogeny from now on. Let $\Lambda_A$ denote the lattice in $\C^n$ such that $A=\C^n/\Lambda_A$. Let $\Lambda_B\subseteq\Lambda_A$ be a $G$-invariant sublattice, and let $B:=\C^n/\Lambda_B$ be the induced abelian variety, along with the $G$-isogeny
\[\pi:B\to A,\]
whose analytic representation is the identity. Note that this implies that $\sigma\in G$ is a pseudoreflection of $B$ if and only if it is a pseudoreflection of $A$. We may then consider the subvarieties $E_\sigma,D_\sigma\subset A$ defined as above, which we will denote by $E_{\sigma,A} $ and $D_{\sigma,A}$. Now, we can do the same thing for $B$ and hence we obtain subvarieties $E_{\sigma,B},D_{\sigma,B}\subset B$. Note that, by definition, $\pi$ sends $E_{\sigma,B}$ to $E_{\sigma,A}$ and $D_{\sigma,B}$ to $D_{\sigma,A}$.\\

Define $\Delta:=\ker(\pi)$. Since $\pi$ is $G$-equivariant, $G$ acts on $\Delta$ and hence we may consider the group $\Delta\rtimes G$. This group acts on $B$ in the obvious way: $\Delta$ acts by translations and $G$ by automorphisms. In particular, we see that the quotient $B/(\Delta\rtimes G)$ is isomorphic to $A/G$.

Our goal is to reduce as much as we can the structure of $B/G$ and $\Delta$ and to prove that the latter must be trivial in several cases. Fix then a pseudoreflection $\sigma\in G$ of order $r$ and consider the subvarieties $E_{\sigma,A},D_{\sigma,A}\subset A$ and $E_{\sigma,B},D_{\sigma,B}\subset B$. Define moreover $F_{\sigma,A}=E_{\sigma,A}\cap D_{\sigma,A}$ and $F_{\sigma,B}$ similarly. Then the isogeny $\pi:B\to A$ sends $F_{\sigma,B}$ to $F_{\sigma,A}$.

\begin{lemma}
Assume that the map $E_{\sigma,B}\to E_{\sigma,A}$ is injective and that the map $F_{\sigma,B}\to F_{\sigma,A}$ is surjective. Then $\Delta\subset D_{\sigma,B}$.
\end{lemma}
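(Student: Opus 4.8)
The plan is to exploit the complementarity of $D_{\sigma,B}$ and $E_{\sigma,B}$ established in Lemma \ref{lemma Esigma Dsigma} in order to split an arbitrary element of $\Delta$ into a $D_{\sigma,B}$-part and an $E_{\sigma,B}$-part, and then to use the two hypotheses to force the $E_{\sigma,B}$-part to land inside $D_{\sigma,B}$.

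First I would record that, since $E_{\sigma,B}$ and $D_{\sigma,B}$ are complementary abelian subvarieties of $B$, the addition map $D_{\sigma,B}\times E_{\sigma,B}\to B$ is an isogeny; in particular it is surjective, so every point of $B$ can be written as $x+y$ with $x\in D_{\sigma,B}$ and $y\in E_{\sigma,B}$. I then fix $\delta\in\Delta$ and write $\delta=x+y$ in this way. This decomposition is not unique — it is well defined only up to adding an element of $F_{\sigma,B}=D_{\sigma,B}\cap E_{\sigma,B}$ to $x$ and subtracting it from $y$ — but since $F_{\sigma,B}\subseteq D_{\sigma,B}$ this ambiguity will be harmless for the conclusion.

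Next I apply $\pi$. Using that $\pi$ maps $D_{\sigma,B}$ into $D_{\sigma,A}$ and $E_{\sigma,B}$ into $E_{\sigma,A}$, together with $\pi(\delta)=0$, I obtain $\pi(y)=-\pi(x)$, where the left side lies in $E_{\sigma,A}$ and the right side in $D_{\sigma,A}$. Hence $\pi(y)\in E_{\sigma,A}\cap D_{\sigma,A}=F_{\sigma,A}$. The surjectivity hypothesis on $F_{\sigma,B}\to F_{\sigma,A}$ now furnishes a point $w\in F_{\sigma,B}$ with $\pi(w)=\pi(y)$. Since $w\in F_{\sigma,B}\subseteq E_{\sigma,B}$, the difference $y-w$ lies in $E_{\sigma,B}$ and satisfies $\pi(y-w)=0$; the injectivity hypothesis on $E_{\sigma,B}\to E_{\sigma,A}$ then forces $y=w$. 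Thus $y\in F_{\sigma,B}\subseteq D_{\sigma,B}$, and therefore $\delta=x+y\in D_{\sigma,B}$, which is exactly the claim.

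I do not anticipate a genuine obstacle here: once the complementary decomposition $B\sim D_{\sigma,B}\times E_{\sigma,B}$ is in place, the argument is formal. The only points requiring care are bookkeeping — tracking into which subvariety each component and each image falls — and making sure both hypotheses are truly needed: the surjectivity of $F_{\sigma,B}\to F_{\sigma,A}$ is what produces the correcting point $w$, while the injectivity of $E_{\sigma,B}\to E_{\sigma,A}$ is what collapses $y-w$ to zero. If either hypothesis is dropped, one can only conclude that the $E_{\sigma,B}$-component of $\delta$ lies in $F_{\sigma,B}$ up to an element of $\Delta\cap E_{\sigma,B}$, which is insufficient.
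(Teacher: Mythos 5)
Your proof is correct and follows essentially the same route as the paper's: decompose $\delta$ as a sum of a $D_{\sigma,B}$-component and an $E_{\sigma,B}$-component, observe that the image of the $E$-component lands in $F_{\sigma,A}$, and use surjectivity of $F_{\sigma,B}\to F_{\sigma,A}$ together with injectivity of $E_{\sigma,B}\to E_{\sigma,A}$ to conclude that this component lies in $F_{\sigma,B}\subseteq D_{\sigma,B}$. The only difference is presentational: the paper compresses your two-step argument (produce $w$, then collapse $y-w$) into the single identity $E_{\sigma,B}\cap\pi^{-1}(F_{\sigma,A})=F_{\sigma,B}$, which your version spells out explicitly.
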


\begin{proof}
Since $E_{\sigma,B}$ and $D_{\sigma,B}$ generate $B$, we have $z=x+y\in\Delta=\ker(\pi)$ with $x\in E_{\sigma,B}$ and $y\in D_{\sigma,B}$ for every $z\in\Delta$. Then $\pi(z)=0$ implies $\pi(x)=-\pi(y)\in F_{\sigma,A}$. But since $F_{\sigma,B}\to F_{\sigma,A}$ is surjective and $E_{\sigma,A}\to E_{\sigma,B}$ is injective, we have that $x\in E_{\sigma,B}\cap\pi^{-1}(F_{\sigma,A})=F_{\sigma,B}$. Thus $x\in D_{\sigma, B}$ and hence $\Delta\subset D_{\sigma,B}$.
\end{proof}

Since all conjugates of a pseudoreflection are pseudoreflections and everything is $G$-equivariant, we immediately get the following result.

\begin{proposition}\label{prop Delta and D}
Let $\sigma\in G$ be a pseudoreflection and assume that the map $E_{\sigma,B}\to E_{\sigma,A}$ is injective and that the map $F_{\sigma,B}\to F_{\sigma,A}$ is surjective. Then the subgroup $\Delta=\ker(\pi)$ is contained in $D_{\tau\sigma\tau^{-1},B}$ for \emph{every} $\tau\in G$.\qed
\end{proposition}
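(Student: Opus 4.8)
The plan is to deduce this as a formal corollary of the preceding lemma by transporting its conclusion under conjugation by $\tau$, exploiting that the entire configuration is $G$-equivariant. Fix $\tau\in G$ and set $\sigma':=\tau\sigma\tau^{-1}$; conjugation by an automorphism carries a hyperplane-fixing element to another such element, so $\sigma'$ is again a pseudoreflection of the same order $r$ and the subvarieties $E_{\sigma',B}$, $D_{\sigma',B}$, $F_{\sigma',B}$ are defined. The first thing I would record are the conjugation identities $D_{\sigma',B}=\tau(D_{\sigma,B})$, $E_{\sigma',B}=\tau(E_{\sigma,B})$ and $F_{\sigma',B}=\tau(F_{\sigma,B})$ (and likewise on $A$). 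These follow from the operator identities $1-\sigma'=\tau(1-\sigma)\tau^{-1}$ and $1+\sigma'+\cdots+(\sigma')^{r-1}=\tau(1+\sigma+\cdots+\sigma^{r-1})\tau^{-1}$, since $\tau$ is an automorphism and hence taking images commutes with applying $\tau$; intersecting yields the identity for $F$.

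The cleanest way to conclude is to use that $\Delta=\ker(\pi)$ is $G$-invariant: since $\pi$ is $G$-equivariant, $\tau(\Delta)=\Delta$. The lemma gives $\Delta\subset D_{\sigma,B}$ under the stated hypotheses on $\sigma$, and applying the isomorphism $\tau$ together with $\tau(D_{\sigma,B})=D_{\sigma',B}$ yields $\Delta=\tau(\Delta)\subset\tau(D_{\sigma,B})=D_{\sigma',B}$, which is exactly the claim. I would present this route, as it invokes the hypotheses only for $\sigma$ itself.

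An equivalent alternative is to re-apply the lemma directly to $\sigma'$, after checking that its hypotheses transport. Here one notes that the restriction $E_{\sigma',B}\to E_{\sigma',A}$ of $\pi$ sits in a commutative square whose vertical maps are the isomorphisms induced by $\tau$ on $B$ and on $A$ (these commute with $\pi$ by $G$-equivariance), and similarly for $F$; since the vertical maps are bijective, injectivity of $E_{\sigma,B}\to E_{\sigma,A}$ forces injectivity of $E_{\sigma',B}\to E_{\sigma',A}$, and surjectivity of $F_{\sigma,B}\to F_{\sigma,A}$ forces that of $F_{\sigma',B}\to F_{\sigma',A}$. There is no genuine obstacle in either route; the only care needed is the bookkeeping of the conjugation identities and of the $G$-invariance of $\Delta$ (respectively, of the transport of the injectivity/surjectivity hypotheses).
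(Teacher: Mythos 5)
Your proof is correct and is essentially the paper's own argument made explicit: the paper dispatches this proposition with the one-line remark that conjugates of pseudoreflections are pseudoreflections and everything is $G$-equivariant, and both of your routes (transporting $\Delta\subset D_{\sigma,B}$ via $\tau(\Delta)=\Delta$ and $\tau(D_{\sigma,B})=D_{\tau\sigma\tau^{-1},B}$, or transporting the injectivity/surjectivity hypotheses and re-applying the lemma to $\tau\sigma\tau^{-1}$) are precise spellings-out of exactly that remark.
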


We conclude this section by studying pseudoreflections in $\Delta\rtimes G$.

\begin{lemma}\label{main lemma}
Let $\sigma\in\Delta\rtimes G$ be a pseudoreflection. Then $\sigma=(t,\tau)$ with $\tau\in G$ a pseudoreflection and $t\in\Delta\cap E_{\tau,B}$.
\end{lemma}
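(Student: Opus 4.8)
The plan is to compute directly with the affine action of $\sigma$ on $B$ and to read off both conclusions from the shape of its fixed locus. Writing $\sigma=(t,\tau)$, the element acts on $B$ by $b\mapsto\tau(b)+t$, so its fixed locus is
\[
\fix(\sigma)=\{b\in B: (1-\tau)(b)=t\}.
\]
As a fiber of the group homomorphism $1-\tau\colon B\to B$, this set is either empty or a translate (coset) of $\ker(1-\tau)$; in particular, when nonempty, it has dimension equal to that of the connected component of $\ker(1-\tau)$ containing $0$, which is exactly $D_{\tau,B}$ by part 1 of Lemma \ref{lemma Esigma Dsigma} applied to $B$.

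First I would extract the conclusion that $\tau$ is a pseudoreflection. By definition, $\sigma$ being a pseudoreflection of $B$ means that $\fix(\sigma)$ contains a divisor that $\sigma$ fixes pointwise; since $\fix(\sigma)$ is \emph{itself} fixed pointwise, this simply forces $\fix(\sigma)$ to be nonempty of dimension $n-1$. Nonemptiness rules out the case $\tau=\mathrm{id}$ with $t\neq 0$ (a nonzero translation of $B$ is fixed-point-free), while the case $t=0,\ \tau=\mathrm{id}$ is excluded because the identity is not a pseudoreflection. Hence $\tau\neq\mathrm{id}$, and the dimension count gives $\dim D_{\tau,B}=n-1$, i.e. the analytic representation of $\tau$ fixes a hyperplane in $T_0(B)$. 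Since the analytic representations of $A$ and $B$ coincide, $\tau$ is a pseudoreflection of $G$.

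It then remains to locate $t$. The nonemptiness of $\fix(\sigma)$ means precisely that the equation $(1-\tau)(b)=t$ is solvable in $B$, i.e. $t\in\im(1-\tau)=E_{\tau,B}$ by the very definition of $E_{\tau,B}$. As $t\in\Delta$ by construction, we conclude $t\in\Delta\cap E_{\tau,B}$, which establishes the second assertion and completes the argument.

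There is no serious obstacle beyond careful bookkeeping: the whole argument rests on correctly setting up the semidirect-product action and on recognizing the fixed locus of an affine map as a coset of $\ker(1-\tau)$. The only point demanding attention is translating the geometric notion ``fixes a divisor pointwise'' into the two algebraic conditions---a nonempty fiber, giving $t\in E_{\tau,B}$, and a fiber of maximal dimension $n-1$, giving that $\tau$ is a pseudoreflection---after which both claims follow at once from Lemma \ref{lemma Esigma Dsigma} and the definitions of $D_{\tau,B}$ and $E_{\tau,B}$.
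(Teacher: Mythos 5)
Your proof is correct and takes essentially the same route as the paper's: both identify the fixed locus of $(t,\tau)$ with the fiber $(1-\tau)^{-1}(t)$, observe that it is a coset of $\ker(1-\tau)$ since $1-\tau\in\End(B)$, and conclude that containing a divisor forces $\tau$ to be a pseudoreflection and $t\in\im(1-\tau)=E_{\tau,B}$. Your version merely spells out the edge cases ($\tau=\mathrm{id}$ with $t\neq 0$ or $t=0$) that the paper's ``if and only if'' leaves implicit.
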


\begin{proof}
Let $t\in\Delta$ and $\tau\in G$ be such that $\sigma=(t,\tau)\in\Delta\rtimes G$. This element acts on $B$ sending $x$ to $\tau(x)+t$. By definition, $\sigma$ must fix a divisor pointwise, that is, there is a subvariety $C\subset B$ of codimension 1 such that $x=\tau(x)+t$ for all $x\in C$, or equivalently, $x\in (1-\tau)^{-1}(t)$. But since $1-\tau\in\End(B)$, we see that $C$ is a translate of $\ker(1-\tau)$, which is a divisor if and only if $\tau$ is a pseudoreflection and $t\in (1-\tau)(B)=E_{\tau,B}$.
\end{proof}

\subsection{Reduction to irreducible representations}\label{sec red irred}
Let $G$ be a group that acts by algebraic homomorphisms on an abelian variety $A$ such that $A/G$ is smooth. In particular the analytic representation of $G$ on $T_0(A)$ is a finite complex reflection group. It is well-known (cf.~for instance \cite{ST} or \cite[\S1.4]{Popov}), that $G\cong G_1\times\cdots\times G_r$ and $T_0(A)=W_0\oplus W_1\oplus\cdots\oplus W_r$ where 
\begin{itemize}
\item $W_i$ is an irreducible complex representation of $G_i$ that makes $G_i$ an irreducible finite complex reflection group for $i>0$;
\item $G_j$ acts trivially on $W_i$ for $i\neq j$.
\end{itemize}
In particular, $W_0=T_0(A)^G$.

\begin{lemma}\label{lemma desc Ai}
The subspace $W_i$ induces a $G$-stable abelian subvariety $A_i$ of $A$ such that $G_j$ acts trivially on $A_i$ for $i\neq j$. Moreover, $A_i/G=A_i/G_i$ is smooth.
\end{lemma}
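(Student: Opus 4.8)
The plan is to build $A_i$ out of fixed-point subvarieties and complementary abelian subvarieties, and then to deduce smoothness of $A_i/G_i$ from that of $A/G$ via the Chevalley--Shephard--Todd theorem.

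First I would construct $A_i$. Write $G_i':=\prod_{j\neq i}G_j$, so that $G_i'$ acts trivially on $W_i$ and, being the product of the irreducible factors $G_j$ ($j\neq i$) on the remaining summands, has no nonzero fixed vector on any $W_j$ with $j\neq i$, $j>0$. Hence $T_0(A)^{G_i'}=W_0\oplus W_i$, and the connected component $B_i:=(A^{G_i'})^0$ is an abelian subvariety of $A$ with $T_0(B_i)=W_0\oplus W_i$. Likewise $A_0:=(A^G)^0$ is an abelian subvariety with tangent space $W_0=T_0(A)^G$, and $A_0\subseteq B_i$. Since $\mathcal L$ is $G$-invariant, $G$ acts unitarily on $T_0(A)$ for the Hermitian form $H=c_1(\mathcal L)$, so the isotypic pieces $W_0,\dots,W_r$ are pairwise $H$-orthogonal; in particular $W_i$ is the orthogonal complement of $W_0$ inside $W_0\oplus W_i$. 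I would then define $A_i$ to be the complementary abelian subvariety of $A_0$ inside $B_i$ with respect to $\mathcal L|_{B_i}$, exactly as in Lemma \ref{lemma Esigma Dsigma}; by orthogonality $T_0(A_i)=W_i$.

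Next come the two structural claims. Because an abelian subvariety is determined by the complex subspace of $T_0(A)$ it spans and $W_i$ is $G$-stable, every $g\in G$ sends $A_i$ to the abelian subvariety with tangent space $g(W_i)=W_i$, namely $A_i$ itself, so $A_i$ is $G$-stable. For $j\neq i$ and $g\in G_j$, the automorphism $g|_{A_i}$ fixes the origin and has differential $\mathrm{id}$ on $W_i$; an automorphism of an abelian variety fixing $0$ is linear on the universal cover, hence equals the identity. Thus $G_j$ acts trivially on $A_i$ for $j\neq i$, the $G$-action on $A_i$ factors through $G_i$, and $A_i/G=A_i/G_i$.

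The main work, and the step I expect to be the real obstacle, is smoothness of $A_i/G_i$. I would argue by Chevalley--Shephard--Todd: it suffices to show that for every $x\in A_i$ the stabilizer $\stab_{G_i}(x)$ acts on $T_0(A_i)=W_i$ as a group generated by pseudoreflections (recall that, as $g\in G_i$ fixes $0$, its differential at the fixed point $x$ equals its analytic representation). The key observation is that in the product reflection group $G=\prod_k G_k$ acting on $\bigoplus_k W_k$, any pseudoreflection must act trivially on all but one factor $W_k$ and as a pseudoreflection there, since each $G_k$ is faithful on $W_k$ and the codimensions of the fixed subspaces add up to $1$; hence every pseudoreflection of $G$ lies in a single factor $G_k$ and, if $k=i$, restricts to a pseudoreflection of $A_i$. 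Now since $G_j$ ($j\neq i$) fixes $A_i$ pointwise, $\stab_G(x)=\stab_{G_i}(x)\times\prod_{j\neq i}G_j$, so projection onto $G_i$ carries $\stab_G(x)$ onto $\stab_{G_i}(x)$. Smoothness of $A/G$ gives, by Chevalley--Shephard--Todd, that $\stab_G(x)$ is generated by pseudoreflections of $A$; projecting these to $G_i$ sends each either to $1$ (those lying in some $G_j$, $j\neq i$) or to a pseudoreflection of $A_i$ (those lying in $G_i$), and the images generate $\stab_{G_i}(x)$. Hence $\stab_{G_i}(x)$ is generated by pseudoreflections of $A_i$ for every $x$, and $A_i/G_i$ is smooth. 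The delicate points to pin down are the faithfulness of $G_k$ on $W_k$, which is immediate as it is the reflection representation, and the codimension count ensuring a pseudoreflection is supported on a single factor.
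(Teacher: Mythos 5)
Your proof is correct, but you construct $A_i$ by a genuinely different route than the paper. The paper builds $A_i$ from the inside out: it takes a pseudoreflection $\sigma\in G_i$, notes that the line $L=(1-\sigma)(T_0(A))$ inducing the elliptic curve $E_\sigma$ of Lemma \ref{lemma Esigma Dsigma} lies in $W_i$, and uses irreducibility of the $G_i$-action on $W_i$ to get $W_i=\sum_{\tau\in G}\tau(L)$, so that $A_i:=\sum_{\tau\in G}\tau(E_\sigma)$ is an abelian subvariety with tangent space exactly $W_i$ (and $G$-stability is immediate from the construction). You instead work from the outside in: you realize $W_0\oplus W_i$ as the tangent space of the fixed-locus component $B_i=(A^{G_i'})^0$ with $G_i'=\prod_{j\neq i}G_j$, and then cut out $A_i$ as the complementary abelian subvariety of $A_0$ inside $B_i$, using $H$-orthogonality of the pairwise non-isomorphic isotypic pieces. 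Both constructions are valid; the paper's has the advantage of reusing the $E_\sigma$ machinery it has just set up and of exhibiting $A_i$ explicitly as generated by $G$-translates of an elliptic curve of a pseudoreflection (a picture that recurs throughout the rest of the paper), while yours leans on general theory (smoothness of fixed loci in characteristic zero, existence of complementary abelian subvarieties) and avoids invoking irreducibility of $W_i$ beyond the vanishing $W_j^{G_j}=0$. For the second half, your argument is essentially the paper's: the paper asserts tersely that $\stab_{G_i}(x)=\stab_G(x)\cap G_i$ is generated by pseudoreflections whenever $\stab_G(x)$ is, and your write-up supplies precisely the details behind that assertion (faithfulness of $G_k$ on $W_k$, the codimension count forcing a pseudoreflection to live in a single factor, and the product decomposition $\stab_G(x)=\stab_{G_i}(x)\times G_i'$ allowing one to project generators), followed by the same two-way use of Chevalley--Shephard--Todd.
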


\begin{proof}
Since $W_0=T_0(A)^G$, then $A_0$ is the neutral connected component of $A^G$ and $A_0/G=A_0$. Assume now $i>0$, let $\sigma\in G_i$ be a pseudoreflection and let $L$ be the linear subspace of $T_0(A)$ that induces $E_\sigma$. It is clear that $L\subseteq W_i$, since $L=(1-\sigma)(T_0(A))$. Since the representation of $G_i$ on $W_i$ is irreducible, we have that 
$$W_i=\sum_{\tau\in G}(\tau(L)).$$
Therefore, $W_i$ is the tangent space of the abelian subvariety $A_i=\sum_{\tau\in G}\tau(E_\sigma)$. It is clear that $A_i$ is $G$-stable and $G_j$ acts trivially on $A_i$ for $i\neq j$ so that $A_i/G=A_i/G_i$. Moreover, since $\stab_{G_i}(x)=\stab_G(x)\cap G_i$ for $x\in A_i$ and every pseudoreflection in $G$ belongs to some $G_j$, it is easy to see that $\stab_{G_i}(x)$ is generated by pseudoreflections in $G_i$ whenever $\stab_G(x)$ is generated by pseudoreflections in $G$. This is the case by the Chevalley-Shephard-Todd Theorem because $A/G$ is smooth and therefore $A_i/G_i$ is smooth.
\end{proof}

We can now prove that, whenever $A_0$ is trivial, it is enough to understand the case when the action of $G$ on $T_0(A)$ is irreducible.

\begin{theorem}\label{thm red irred}
Let $G$ be a group that acts by algebraic homomorphisms on an abelian variety $A$ such that $A/G$ is smooth. Assume that $\dim(A^G)=0$. Then $A$ is the direct product of the $A_i$, defined as above. In particular, 
\[A/G\cong A_1/G_1\times\cdots\times A_r/G_r.\]
\end{theorem}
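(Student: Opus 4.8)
The plan is to realize the candidate decomposition as an isogeny and then show it is an isomorphism. Using Lemma \ref{lemma desc Ai} I set $B:=A_1\times\cdots\times A_r$ and let $\pi\colon B\to A$ be the addition map. Since $\dim(A^G)=0$ we have $W_0=0$, so $T_0(B)=\bigoplus_i W_i=T_0(A)$ and $\pi$ is an isogeny whose analytic representation is the identity; thus $\pi$ is a $G$-isogeny in the sense of Section \ref{sect isogeny}, with $B/(\Delta\rtimes G)\cong A/G$ for $\Delta:=\ker\pi$. The whole theorem reduces to proving $\Delta=0$. The crucial elementary observation is that $\pi$ restricted to each factor $A_i\subset B$ is simply the inclusion $A_i\hookrightarrow A$, hence injective; therefore $\Delta\cap A_i=0$ for every $i$. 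From this I deduce first that $\Delta\subseteq B^G$: for $z\in\Delta$ and $g\in G_i$, the difference $gz-z$ is supported on the $i$-th factor and lies in the $G$-stable group $\Delta$, so $gz-z\in\Delta\cap A_i=0$; as the $G_i$ generate $G$ this gives $z\in B^G$. In particular the $G$-action on $\Delta$ is trivial, so $\Delta\rtimes G=\Delta\times G$.

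Next I identify the subgroup $R\leq\Gamma:=\Delta\times G$ generated by the pseudoreflections of $\Gamma$. By Lemma \ref{main lemma} every such pseudoreflection has the form $(t,\tau)$ with $\tau\in G$ a pseudoreflection and $t\in\Delta\cap E_{\tau,B}$. As noted in the proof of Lemma \ref{lemma desc Ai}, any pseudoreflection $\tau$ of $G$ lies in a single factor $G_i$, whence $E_{\tau,B}=\im(1-\tau)\subseteq A_i$ and so $t\in\Delta\cap A_i=0$. Thus \emph{every} pseudoreflection of $\Gamma$ is of the form $(0,\tau)$, and $R=\{0\}\times G\cong G$. Consequently $Y:=B/R=B/G=A_1/G_1\times\cdots\times A_r/G_r$ is smooth by Lemma \ref{lemma desc Ai}, and $A/G\cong Y/\Delta$ where $\Delta$ acts on $Y$ through the translations descending from $B$. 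Moreover no nontrivial element of $\Delta$ acts as a pseudoreflection on $Y$: such an element would lift to a reflection of $\Gamma$, which by the above must lie in $R=\{0\}\times G$ and hence be trivial.

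It remains to show $\Delta=0$. Suppose not, and pick $0\neq s\in\Delta\subseteq B^G$. The key input is the existence of a \emph{fixed-point-free} element $g\in G$, i.e.\ one with $1\notin\operatorname{Spec}(g)$; I obtain it as a product $g=(g_1,\dots,g_r)$ where each $g_i$ is a regular (Coxeter-type) element of the irreducible factor $G_i$ acting on $W_i$, which has no eigenvalue $1$ (for $\dim W_i=1$ any generator of the cyclic $G_i$ works, and in general one invokes Springer's theory of regular elements). Then $g-1$ is surjective on $B$, so there is $b\in B$ with $(g-1)b=s$, i.e.\ $b+s=gb\in Gb$; hence the image $[b]\in Y$ is fixed by the translation $\bar s$. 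Thus $\stab_\Delta([b])$ is nontrivial and, by the previous paragraph, contains no pseudoreflection of $Y$; since $g$ has no eigenvalue $1$, the induced action of $\bar s$ on $T_{[b]}Y$ fixes no hyperplane, confirming it is not a pseudoreflection. By the Chevalley--Shephard--Todd Theorem this forces $Y/\Delta\cong A/G$ to be singular at the image of $[b]$, contradicting smoothness. Therefore $\Delta=0$, $\pi$ is an isomorphism, $A=A_1\times\cdots\times A_r$, and $A/G\cong\prod_i A_i/G_i$.

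I expect the main obstacle to be the final paragraph, namely securing a fixed-point-free element of $G$: this is the one genuinely representation-theoretic ingredient, resting on the existence of regular elements in each irreducible complex reflection factor, and it is essential for producing the fixed point on $Y$ that triggers the contradiction. A secondary point requiring care is the application of Chevalley--Shephard--Todd to the finite group $\Delta$ acting on the smooth but \emph{non-abelian} variety $Y$, where I must verify both that $\Delta$ genuinely contains no pseudoreflections on $Y$ and that a nontrivial stabilizer indeed obstructs smoothness; the reduction through $R=\{0\}\times G$ is designed precisely to make the ``no pseudoreflections'' condition transparent.
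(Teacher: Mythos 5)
Your proposal is correct and follows essentially the same strategy as the paper's proof: form the addition isogeny $B=A_1\times\cdots\times A_r\to A$, show its kernel $\Delta$ is fixed pointwise by $G$ (so $\Delta\rtimes G=\Delta\times G$), use an element $g$ with $1-g$ surjective to produce a $\Delta$-fixed point on $\prod_i A_i/G_i$, show that no nontrivial element of $\Delta$ can be a pseudoreflection, and conclude $\Delta=0$ via Chevalley--Shephard--Todd. The differences are only tactical: the paper obtains the no-pseudoreflection claim directly from the product structure of the fixed loci on $\prod_i A_i/G_i$ (whereas your lifting-to-$\Gamma$ step, via Lemma \ref{main lemma} and $\Delta\cap A_i=0$, needs a short irreducible-component argument to be fully rigorous, and your tangent-space remark at $[b]$ is unnecessary and delicate when $\mathrm{Stab}_G(b)\neq 1$), and the paper produces the fixed point coordinatewise from its Lemma \ref{lem surj} rather than from a single regular element, making the appeal to Springer's theory superfluous.
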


We will need the following small result on irreducible finite complex reflection groups:

\begin{lemma}\label{lem surj}
Let $G$ be a finite complex reflection group acting irreducibly on $\C^n$. Then there exists $\tau\in G$ such that $(1-\tau)$ is surjective.
\end{lemma}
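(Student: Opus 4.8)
The plan is to find a single element of $G$ whose analytic representation $1-\tau$ is surjective on $\C^n$, using the irreducibility of the action together with the fact that $G$ is generated by pseudoreflections. First I would recall that each pseudoreflection $\sigma$ has $\ker(1-\sigma)$ a hyperplane, so $1-\sigma$ has rank exactly $1$; these rank-one pieces are far from surjective, so the point is to combine pseudoreflections into one element whose associated eigenvalue-$1$ space has collapsed to $0$. The cleanest reformulation is: $(1-\tau)$ is surjective if and only if $\ker(1-\tau)=0$, i.e. $\tau$ has no eigenvalue equal to $1$, i.e. $1$ is not an eigenvalue of $\tau$. So the goal becomes to produce $\tau\in G$ that does not fix any nonzero vector.

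The key step is to show that the subset of $\C^n$ fixed by \emph{some} nontrivial element of $G$ cannot cover all of $\C^n$, and more precisely that a generic element fixes only $0$. I would argue as follows. For each $g\in G$ let $V_g=\ker(1-g)=\fix(g)$ be its fixed subspace; if every $g\in G$ had a nonzero fixed vector, then $\C^n=\bigcup_{g\in G\setminus\{1\}}V_g$ would be impossible to contradict directly, so instead I would look at the Molien-series / character-theoretic count of fixed vectors, or more elementarily use the following averaging argument. Consider the common fixed space $\bigcap_{g\in G}V_g=(\C^n)^G$, which is $0$ by irreducibility (the invariants of a nontrivial irreducible representation vanish). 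Since $G$ is finite, $\bigcup_{g\neq 1}V_g$ is a finite union of proper linear subspaces (each $V_g\neq \C^n$ because $g\neq 1$ acts nontrivially, and an element acting nontrivially on an irreducible space cannot fix a hyperplane-or-larger subspace and still be handled — here I must be careful, as a pseudoreflection \emph{does} fix a hyperplane). The correct statement is that each $V_g$ is a \emph{proper} subspace, and over the infinite field $\C$ a vector space is never a finite union of proper subspaces; hence there exists $v\in\C^n$ with $v\notin V_g$ for all $g\neq 1$, i.e. $v$ is fixed only by the identity.

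From such a $v$ I would build the desired $\tau$. Having a vector $v$ with trivial stabilizer in $G$ does not immediately give an element with no nonzero fixed vector, so the real extraction step is: take $\tau\in G$ to be an element of maximal order, or better, consider the element whose fixed space is minimal among all $V_g$. Since $\bigcup_{g\neq1}V_g\neq\C^n$, pick $v$ outside the union; then look at the cyclic subgroup or at a product of pseudoreflections moving $v$ off every hyperplane. The efficient route is a dimension induction on $\fix$: start with any pseudoreflection $\sigma_1$, so $\dim\fix(\sigma_1)=n-1$; because the representation is irreducible, the fixed hyperplane $\fix(\sigma_1)$ is not $G$-stable, so there is $\sigma_2\in G$ (a pseudoreflection, since they generate $G$) with $\fix(\sigma_2)\not\supseteq\fix(\sigma_1)$, and then $\dim\fix(\sigma_1\sigma_2)\le n-2$ for a suitable product; iterating $n$ times drives the fixed space down to $0$.

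The main obstacle I anticipate is precisely this last extraction: a product $\sigma_1\sigma_2$ of two pseudoreflections need not have $\fix(\sigma_1\sigma_2)=\fix(\sigma_1)\cap\fix(\sigma_2)$ — the fixed space can jump up because new eigenvalue-$1$ vectors can appear. Controlling this requires either a genericity argument (the set of group elements with a prescribed lower bound on $\dim\fix$ is constrained, and irreducibility forbids a common invariant complement) or, most safely, the clean union-of-subspaces argument combined with the observation that if every element of $G$ had a nonzero fixed point then the fixed-point loci would still be proper subspaces whose union is all of $\C^n$, contradicting that $\C^n$ over an infinite field is not a finite union of proper subspaces. I would therefore lead with the union-of-subspaces contradiction, which directly yields a $\tau$ with $\fix(\tau)=0$ and hence $1-\tau$ surjective, and treat the inductive product argument only as a fallback if one wants an explicit construction.
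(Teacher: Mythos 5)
Your reduction of the statement to ``$1$ is not an eigenvalue of some $\tau\in G$'' is correct, and is exactly how the paper frames it, but the argument you lead with does not prove this. The union-of-subspaces argument shows only that $\bigcup_{g\neq 1}\fix(g)\subsetneq\C^n$, i.e.\ that some vector $v$ has trivial stabilizer; the lemma instead asks for an element whose fixed space is zero, and neither statement implies the other. In particular, your proposed contradiction --- ``if every nontrivial $g$ had a nonzero fixed vector, then the $\fix(g)$ would be proper subspaces covering $\C^n$'' --- is a non sequitur: nothing forces the fixed loci to cover $\C^n$. A decisive check that this argument cannot work: it uses irreducibility only to ensure each $\fix(g)$ is proper, which holds for \emph{any} faithful action, yet the lemma is false for faithful reducible actions. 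For example, $G=\Z/2\Z$ acting on $\C^2$ by swapping the coordinates is a reflection group, every element fixes a nonzero vector (the diagonal), and $\bigcup_{g\neq 1}\fix(g)$ is indeed a proper subspace --- so your argument's conclusion (a vector with trivial stabilizer) holds, while the lemma's conclusion fails.

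Your fallback --- multiplying pseudoreflections to drive the fixed space down to zero --- is the right idea and is essentially the classical construction of a Coxeter-type element, but you leave open precisely the hard step: as you yourself note, $\fix(\sigma_1\sigma_2)$ need not equal $\fix(\sigma_1)\cap\fix(\sigma_2)$, since eigenvalue-$1$ vectors can reappear in the product. What makes the induction work is the standard lemma that if pseudoreflections $\sigma_1,\dots,\sigma_k$ have linearly independent root lines, then $\fix(\sigma_1\cdots\sigma_k)=\bigcap_i\fix(\sigma_i)$, of dimension $n-k$; irreducibility together with generation by pseudoreflections guarantees that the root lines span $\C^n$, so one may choose $n$ of them linearly independent and conclude. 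Without that lemma (or a substitute) the proposal is incomplete, and the flawed union argument cannot serve as the ``safe'' alternative you fall back on. For comparison, the paper's proof is a one-line citation: it invokes Shephard--Todd's Theorem~5.4, which exhibits an element of any irreducible finite complex reflection group having no eigenvalue equal to $1$.
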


\begin{proof}
This amounts to finding an element $\tau\in G$ such that 1 is not an eigenvalue of $\tau$. Now this follows directly from \cite[Thm.~5.4]{ST}.
\end{proof}

\begin{proof}[Proof of Theorem \ref{thm red irred}]
Consider the subvarieties $A_i\subset A$ from Lemma \ref{lemma desc Ai} for $i\geq 1$ ($A_0$ is trivial by the hypothesis on $A^G$). Then there is a natural $G$-isogeny
\[B:=A_1\times\cdots\times A_r\to A,\]
given by the sum in $A$. In particular, the kernel of this isogeny is
\[\Delta:=\left\{(a_1,\ldots,a_r)\in A_1\times\cdots\times A_r\mid \sum_{i=0}^r a_i=0\right\}.\]
We claim that $\Delta$ is fixed pointwise by $G$. Indeed, since $a_i\in A_i$, we know that $G_j$ acts trivially on it for $j\neq i$; but since $a_i=-\sum_{j\neq i}a_j\in \sum_{j\neq i}A_j$, we also know that $G_i$ acts trivially on it (since it acts trivially on every $A_j$ for $j\neq i$). We see then that $G$ acts trivially on every coordinate of every element of $\Delta$, which proves the claim.

Thus, $\Delta\times G$ acts on $B$ and hence $A/G$ is isomorphic to $B/(\Delta\times G)$, i.e.
\[A/G\cong [(A_1/G_1)\times\cdots (A_r/G_r)]/\Delta.\]
All we need to prove now is that $\Delta$ has to be trivial. Assume then that this is not the case and note that the action of $(a_1,\ldots,a_r)\in\Delta$ on $X:=(A_1/G_1)\times\cdots (A_r/G_r)$ corresponds coordinatewise to the action of $a_i$ on $A_i/G_i$ (which is well defined since $a_i$ is $G_i$-invariant and thus its action commutes with that of $G_i$). Now, the action of $a_i$ on $A_i/G_i$ always has a fixed point $p_i$. Indeed, by Lemma \ref{lem surj} we know that there exists $\tau\in G_i$ such that $(1-\tau)$ is surjective. Thus, there exists $x_i\in A_i$ such that $x_i-\tau(x_i)=a_i$, which implies that the image $p_i$ of $x_i$ in $A_i/G_i$ is fixed by $a_i$. We see then that $(p_1,\ldots,p_r)\in X$ is a point that is fixed by $(a_1,\ldots,a_r)$ and thus the action of $\Delta$ on $X$ is not free. It is also a non-trivial action since the image of $0\in B$ in $X$ is clearly moved by $\Delta$.

Since $A/G=X/\Delta$ is smooth, the Chevalley-Shephard-Todd Theorem tells us then that every stabilizer of this action has to be generated by pseudoreflections. Now this is impossible since, for every non-trivial $(a_1,\ldots,a_r)\in\Delta$, its fixed locus in $X$ corresponds to the product of the fixed loci in each $A_i/G_i$ via $a_i$. We see then that if any element in $\Delta$ is a pseudoreflection, it must fix all but one $A_i/G_i$ (otherwise the fixed locus would not be a divisor), which amounts to $a_i=0$ for all but one $i$, and this is impossible since $\sum_{i=1}^ra_i=0$. This proves that $\Delta$ is trivial.
\end{proof}

Let us consider now the ``degenerated'' case in which $\dim(A^G)>0$.

\begin{proposition}\label{And smooth}
Let $G$ be a group that acts by algebraic homomorphisms on an abelian variety $A$. Let $A_0$ be the connected component of $A^G$ containing 0 and let $P_G $ be its complementary abelian subvariety with respect to a $G$-invariant polarization. Then there exists a fibration $A/G\to A_0/(A_0\cap P_G )$ with fibers isomorphic to $P_G /G$. Moreover, $A/G$ is smooth if and only if $P_G /G$ is smooth.
\end{proposition}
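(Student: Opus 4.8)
My plan is to present $A/G$ as the quotient of $A_0\times(P_G/G)$ by a \emph{free} action of the finite group $A_0\cap P_G$, from which both the fibration and the smoothness equivalence follow at once. Since $A_0$ and $P_G$ are complementary with respect to $\mathcal{L}$, their intersection $A_0\cap P_G$ is finite and the addition morphism $\pi\colon B:=A_0\times P_G\to A$ is an isogeny. Because $G$ fixes $A_0$ pointwise and acts on $P_G$, this $\pi$ is a $G$-isogeny in the sense of Section \ref{sect isogeny}, where $G$ acts on $B$ as the product of the trivial action on $A_0$ and the given action on $P_G$; its kernel is $\Delta=\{(x,-x):x\in A_0\cap P_G\}\cong A_0\cap P_G$.

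The first key point is that $G$ fixes $\Delta$ pointwise: a point $(x,-x)\in\Delta$ has $x\in A_0\cap P_G\subseteq A^G$, so $\sigma(x)=x$ for all $\sigma\in G$, whence $\sigma$ fixes $(x,-x)$. Therefore $\Delta\rtimes G=\Delta\times G$, the two actions commute on $B$, and quotienting in either order gives
\[A/G\cong B/(\Delta\times G)\cong (B/G)/\Delta\cong\big(A_0\times(P_G/G)\big)/\Delta,\]
using $B/G=A_0\times(P_G/G)$ since $G$ acts trivially on the first factor.

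Next I would analyse the residual action of $\Delta$ on $Y:=A_0\times(P_G/G)$, where $(x,-x)$ sends $(a,[p])$ to $(a+x,[p-x])$; the translation by $-x$ descends to $P_G/G$ precisely because $x\in A^G$ makes it commute with the $G$-action on $P_G$. This action is free, since a fixed point forces $a+x=a$, i.e.\ $x=0$. Composing the projection $Y\to A_0$ with $A_0\to A_0/(A_0\cap P_G)$ is $\Delta$-invariant and hence descends to a morphism $A/G\to A_0/(A_0\cap P_G)$; normalising the $A_0$-coordinate of any point of a fibre to a fixed representative and using freeness in the $A_0$-direction identifies that fibre with $P_G/G$, yielding the desired fibration.

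Finally, for the smoothness equivalence I would use that a free action of a finite group makes the quotient map $Y\to Y/\Delta=A/G$ finite étale; as étale morphisms are smooth of relative dimension $0$ and smoothness is étale-local on the target, $A/G$ is smooth if and only if $Y=A_0\times(P_G/G)$ is, and since $A_0$ is an abelian variety this holds if and only if $P_G/G$ is smooth. The points requiring the most care are exactly the structural ones underpinning this reduction: that $G$ fixes $\Delta$ pointwise, that the $P_G$-translations descend to $P_G/G$, and above all that $\Delta$ acts freely on $Y$ — it is this freeness, guaranteed by the faithful shift of the $A_0$-coordinate, that lets smoothness pass through the quotient in both directions.
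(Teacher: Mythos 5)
Your proof is correct, and its skeleton is the same as the paper's: you form the addition isogeny $A_0\times P_G\to A$, note that its kernel $\Delta\cong A_0\cap P_G$ is fixed pointwise by $G$ because $A_0\cap P_G\subseteq A^G$, conclude that $A/G\cong\bigl(A_0\times(P_G/G)\bigr)/\Delta$, and obtain both the fibration over $A_0/(A_0\cap P_G)$ and one direction of the smoothness equivalence from the freeness of the $\Delta$-action on the $A_0$-coordinate. Where you genuinely diverge is the converse implication, that smoothness of $A/G$ forces smoothness of $P_G/G$: the paper deduces this from the Chevalley--Shephard--Todd theorem exactly as in the proof of Lemma \ref{lemma desc Ai} (stabilizers of points of $P_G$ are generated by pseudoreflections because stabilizers of points of $A$ are, and pseudoreflections of $A$ restrict to pseudoreflections of $P_G$), whereas you observe that freeness makes the quotient map $Y:=A_0\times(P_G/G)\to Y/\Delta\cong A/G$ finite \'etale --- a fact valid in characteristic $0$ even though $Y$ is a priori singular --- so that smoothness of $A/G$ ascends to $Y$ (\'etale over smooth is smooth) and hence passes to the factor $P_G/G$, while smoothness of $Y$ descends to $A/G$ by faithfully flat descent of regularity. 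Your variant buys a self-contained equivalence in which both directions flow from a single \'etale map, with no reflection-theoretic input at this point; the paper's version buys uniformity, reusing the Chevalley--Shephard--Todd mechanism that drives the rest of the paper, at no extra cost in foundational facts about torsors and descent. The structural points you flagged as delicate --- $G$ fixing $\Delta$ pointwise, the descent of the translations to $P_G/G$, and the freeness of the $\Delta$-action --- are exactly the ones the paper's proof rests on, and you verified them correctly.
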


\begin{proof}
Consider as in the last proof the natural $G$-isogeny $A_0\times P_G \to A$ and denote its kernel by $\Delta$. This can be rewritten as
\[A\cong A_0\stackrel{\Delta}{\times}P_G .\]
Now, the same argument from the proof above shows that $\Delta$ is fixed pointwise by $G$. In particular, the actions of $G$ and $\Delta$ on $P_G $ commute and it is easy to see then that
\[A/G\cong A_0\stackrel{\Delta}{\times} (P_G /G).\]
Recalling that $\Delta\cong A_0\cap P_G $, we may thus see $A/G$ as a fibration over the abelian variety $A_0/(A_0\cap P_G )$ with fibers isomorphic to $P_G /G$.

Finally note that, since the action of $\Delta$ on $A_0$ is free, the quotient $A/G=(A_0\times P_G /G)/\Delta$ is smooth whenever $P_G /G$ is. On the other hand, by the same argument we used for $A_i/G_i$, $P_G /G$ is smooth if $A/G$ is.
\end{proof}

Note that this fibration is non-trivial in general, as shown by the following example: Let $E$ be an elliptic curve and let $e\in E[2]$. Define $B=E\times E$ and let $G=\{\pm 1\}$ act on the second factor. Note in particular that $(e,e)$ is $G$-invariant. Put then $A=B/\langle (e,e)\rangle$ and denote by $\pi:B\to A$ the projection. We have that $A_0=\pi(E\times\{0\})$, $P_G =\pi(\{0\}\times E)$, $B=A_0\times P_G $ and $\Delta=\langle (e,e)\rangle$. We see then that
\[A/G\cong B/(\Delta\times G)\cong (B/G)/\Delta\cong (E\times\bb P^1)/\Delta,\]
where, up to a base change in $\bb P^1$, $\Delta$ acts on $E\times\bb P^1$ by sending $(x,y)$ to $(x+e,-y)$. Looking at the first coordinate, we see then that the action is free and thus defines by \'etale descent a non-trivial $\bb P^1$-bundle over the elliptic curve $E/\langle e\rangle$.

\section{Quotients by irreducible finite complex reflection groups}\label{reflectiongroup}

Given the results from the last section, we will now concentrate on group actions on abelian varieties that satisfy the following condition (which is condition (1) from Theorem \ref{classification}):
\begin{equation}\label{eq cond star}
A/G\text{ is smooth and the analytic representation of }G\text{ is irreducible.}\tag{$\star$}
\end{equation}
If the pair $(A,G)$ satisfies \eqref{eq cond star}, we see that the analytic representation makes $G$ an irreducible finite complex reflection group, in the sense of Shephard-Todd \cite{ST}. These groups were completely classified by Shephard and Todd in \cite{ST}, where they discovered that any finite irreducible complex reflection group is either a group $G(m,p,n)$ depending on $m,p,n\in\mathbb{Z}_{>0}$ where $p\mid m$ and $n\geq 1$, or is one of 34 sporadic cases. The group $G(m,p,n)$ consists of the semidirect product $H\rtimes S_n$ of the abelian group 
\begin{equation}\label{eq def H}
H=H(m,p,n)=\{(\zeta_m^{a_1},\ldots,\zeta_m^{a_n})\mid a_1+\cdots+a_n\equiv0\pmod p\}\subset \mu_m^n
\end{equation}
with the symmetric group $S_n$, where $\zeta_m$ is a primitive $m$-th root of unity and $S_n$ acts on each member by permuting the coordinates in the obvious way. For $m=p=1$, $G(1,1,n)$ is just the symmetric group on $n$ letters, and acts irreducibly on an $(n-1)$-dimensional complex vector space. For $m>1$, $G(m,p,n)$ acts irreducibly on an $n$-dimensional complex vector space.

The purpose of this section is to describe which of these actions actually appear on abelian varieties of dimension $n\geq 3$ such that \eqref{eq cond star} is satisfied. In the following subsections we will analyze each case of the Shephard-Todd classification. In particular, in this section we prove $(1)\Rightarrow(4)$ of Theorem \ref{classification}.

\subsection{The case $m=p=1$: the standard representation of $S_{n+1}$}\label{sec Sn}
Let $G(1,1,n+1)=S_{n+1}$ act on an abelian variety $A$ of dimension $n\geq 2$ in such a way that its action on $T_0(A)$ is the standard one. Let $\sigma=(1\, 2)$ and $E=E_{\sigma}$ be induced by a line $L_\sigma\subseteq T_0(A)$, and define the lattice
\[\Lambda_B:=\sum_{\tau\in S_{n+1}}\tau(L_\sigma\cap\Lambda_A).\]
This gives us a $G$-invariant sublattice of $\Lambda_A$, and we therefore get a $G$-equivariant isogeny $\pi:B\to A$ with kernel $\Delta$. Applying this construction to Example \ref{ex2}, we see that it gives the whole lattice and hence corresponds to Example \ref{ex2} itself. We can thus see $B$ as 
\[B=\{(x_1,\ldots,x_{n+1})\in E^{n+1}\mid x_1+\cdots+x_{n+1}=0\}\]
and $S_{n+1}$ acts coordinatewise in the natural way. Using the notations from Section \ref{sect isogeny}, we see by inspection that $F_{\sigma,B}=E_{\sigma,B}[2]\cong E[2]$, hence the map $\pi:F_{\sigma,B}\to F_{\sigma,A}$ is surjective since by Lemma \ref{lemma Esigma Dsigma} we have $F_{\sigma,A}\subset E_{\sigma,A}[2]\cong E[2]$. Moreover, the induced map $E_{\sigma,B}\to E_{\sigma,A}$ is injective by construction. Thus, by Proposition \ref{prop Delta and D}, we have that $\Delta$ is contained in the fixed locus of all the conjugates of $\sigma$. In other words, $\Delta$ consists of elements of the form $(x,\ldots,x)\in E^{n+1}$ such that $(n+1)x=0$. Note that this implies that the direct product $\Delta\times G$ acts on $B$.

\begin{proposition}\label{prop Sn}
Let $n\geq 2$. If $S_{n+1}$ acts on $A$ in such a way that its analytic representation is the standard representation and $(A,S_{n+1})$ satisfies \eqref{eq cond star}, then $A\cong E^n$ and $S_{n+1}$ acts as in Example \ref{ex2}.
\end{proposition}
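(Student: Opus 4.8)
The plan is to show that the kernel $\Delta$ of the isogeny $\pi\colon B\to A$ is trivial; since $\pi$ has analytic representation equal to the identity, this immediately gives $A\cong B\cong E^n$ with the action of Example \ref{ex2}. By the discussion preceding the statement we already know that $\Delta$ consists of the diagonal elements $(x,\ldots,x)\in E^{n+1}$ with $(n+1)x=0$, that $\Delta$ is fixed pointwise by $G=S_{n+1}$, and that $A/G\cong B/(\Delta\times G)$. I would argue by contradiction: assuming $\Delta\neq 0$, I would exhibit a point of $B$ whose stabilizer in $\Delta\times G$ is not generated by pseudoreflections, contradicting the smoothness of $A/G$ via the Chevalley-Shephard-Todd Theorem.

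The first step is to pin down the pseudoreflections of $\Delta\times G$. By Lemma \ref{main lemma}, any such element has the form $(t,\tau)$ with $\tau\in S_{n+1}$ a transposition and $t\in\Delta\cap E_{\tau,B}$. For a transposition, say $\tau=(1\,2)$, one computes directly that $E_{\tau,B}=\{(a,-a,0,\ldots,0)\mid a\in E\}$; since a nonzero element of $\Delta$ has all of its $n+1\geq 3$ coordinates equal and nonzero, we get $\Delta\cap E_{\tau,B}=0$ (this is exactly where the hypothesis $n\geq 2$ enters). The same holds for every transposition by symmetry. Consequently every pseudoreflection of $\Delta\times G$ lies in the subgroup $\{0\}\times S_{n+1}=G$, and therefore the subgroup of $\Delta\times G$ generated by \emph{all} of its pseudoreflections is contained in $G$.

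The second step is to produce a bad stabilizer. Fix a nonzero $\delta\in\Delta$. By Lemma \ref{lem surj}, applied to the irreducible action of $G$ on $T_0(B)$, there is $\tau\in G$ for which $1-\tau$ is surjective, so that $1-\tau\colon B\to B$ is an isogeny and in particular surjective. Choosing $b\in B$ with $(1-\tau)(b)=\delta$, the element $(\delta,\tau)$ satisfies $\tau(b)+\delta=b$ and hence lies in $\stab_{\Delta\times G}(b)$. Since $\delta\neq 0$, this element does not belong to $G$, so the stabilizer of $b$ strictly contains the subgroup generated by its pseudoreflections. By the Chevalley-Shephard-Todd Theorem this forces $A/G=B/(\Delta\times G)$ to be singular at the image of $b$, contradicting \eqref{eq cond star}. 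Hence $\Delta=0$, and the conclusion follows.

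The delicate points, rather than any lengthy computation, are conceptual: first, correctly identifying via Lemma \ref{main lemma} that every pseudoreflection of $\Delta\times G$ is \emph{forced} into $G$ (the vanishing $\Delta\cap E_{\tau,B}=0$); and second, realizing that it suffices to exhibit a \emph{single} element of some stabilizer lying outside $G$, so that no control over the full stabilizer is needed. The surjectivity of $1-\tau$ from Lemma \ref{lem surj} is precisely what upgrades a nonzero $\delta$ into an honest non-pseudoreflection stabilizer element, in direct analogy with the fixed-point argument used in the proof of Theorem \ref{thm red irred}.
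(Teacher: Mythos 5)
Your proof is correct and follows essentially the same route as the paper's: both reduce to showing $\Delta=0$, use Lemma \ref{main lemma} together with $\Delta\cap E_{\tau,B}=\{0\}$ to force all pseudoreflections of $\Delta\times G$ into $G$, and use Lemma \ref{lem surj} to produce a point whose stabilizer contains $(\delta,\tau)\notin G$, contradicting Chevalley--Shephard--Todd. The only difference is cosmetic: you spell out the computation $E_{\tau,B}=\{(a,-a,0,\ldots,0)\}$ and the role of $n\geq 2$, which the paper states without proof.
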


\begin{proof}
Let $\pi:B\to A$ be the $G$-isogeny defined above. We have to prove then that $\Delta=\{0\}$.
Let $\bar t=(t,\ldots,t)\in\Delta$ be a non-trivial element and let $\tau\in G$ be an element such that $(1-\tau)$ is surjective (such an element exists by Lemma \ref{lem surj}). Then there exists an element $z\in B$ such that $z-\tau(z)=\bar t$ and thus the stabilizer of $z$ contains the element $(\bar t,\tau)\in\Delta\times G$.

Note now that $\Delta\cap E_{\sigma,B}=\{0\}$ for every pseudoreflection $\sigma\in G$. Thus, by Lemma \ref{main lemma}, the only pseudoreflections in $\Delta\times G$ are the transpositions in $G=S_{n+1}$, and so $\stab_G(z)$ cannot be generated by pseudoreflections. Therefore if $\Delta\neq 0$, $A/G$ is not smooth by the Chevalley-Shephard-Todd Theorem, which contradicts condition \eqref{eq cond star}.
\end{proof}

\subsection{The case of $G(m,p,n)$, $m\geq 2$, $n\geq 3$}\label{sec Gmpn}
Now we will study when $G=G(m,p,n)$ acts on an abelian variety $A$ of dimension $n$ for $m\geq2$. \emph{We assume here that $n\geq 3$} (recall that the case of dimension 2 was already dealt with elsewhere). Recall that $G=H\rtimes S_n$, where $H\subset \mu_m^n$ is defined in \eqref{eq def H} and it acts coordinatewise on $\C^n=T_0(A)$, while $S_n$ permutes the variables in the obvious way.

\begin{rem}
In what follows, we will try as much as we can to prove results on $G$ without splitting into subcases depending on the value of $p$. Hence, in the following arguments we will only consider elements in $G(m,m,n)\subset G(m,p,n)$, even if in some cases a simpler argument can be found for certain values of $p$. We will also keep all arguments (with one exception) depending on at most three dimensions, so that they are all valid for $n\geq 3$.
\end{rem}

Let $E_i$ be the image of $\mathbb{C}e_i$ in $A$ via the exponential map. We claim that it corresponds to an elliptic curve. Indeed, consider the element $\tau=(1,\zeta_m,\zeta_m^{-1},1,\ldots,1)\in H$ and denote $\rho=1+\tau+\cdots+\tau^{m-1}$. Then a direct computation shows that, for $\sigma=(1\, 2)\in S_n\subset G$, $\im(\rho(1-\sigma))=\C e_1$. This tells us that $E_1=\rho(1-\sigma)(A)$ and hence it corresponds to an elliptic curve. This allows us to prove the following.

\begin{lemma}\label{lem m leq 6}
Assume that $G$ acts on $A$ as above. Then $m\in\{2,3,4,6\}$ and, if $m\geq 3$, then the curves $E_i\subset A$ have non-trivial automorphisms.
\end{lemma}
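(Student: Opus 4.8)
The plan is to exhibit a single explicit element of $G$ that restricts to an automorphism of the elliptic curve $E_1$ acting as multiplication by $\zeta_m$, and then to read off the constraint on $m$ from the classification of automorphisms of elliptic curves. Since the statement is symmetric under the $S_n$-action permuting the coordinates, it suffices to treat $E_1$.

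First I would consider the diagonal element $g=(\zeta_m,\zeta_m^{-1},1,\ldots,1)$, which makes sense because $n\geq 3\geq 2$. Its exponents sum to $0$, so $g\in H(m,m,n)\subseteq H(m,p,n)\subseteq G$, in keeping with the strategy of the preceding remark. Its analytic representation is the diagonal matrix $\operatorname{diag}(\zeta_m,\zeta_m^{-1},1,\ldots,1)$, which preserves the line $\mathbb{C}e_1$ and acts on it by $\zeta_m$. Consequently $g$ preserves $E_1=\exp(\mathbb{C}e_1)$ and, fixing the origin, restricts to an automorphism $\bar g$ of $E_1$ whose analytic representation is multiplication by $\zeta_m$.

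Next I would argue that $\bar g$ has order exactly $m$. Because the analytic representation of an automorphism of an abelian variety is faithful, $\bar g^{\,k}=\mathrm{id}_{E_1}$ holds if and only if $\zeta_m^k=1$, i.e.~if and only if $m\mid k$. Now the group of automorphisms of an elliptic curve fixing the origin is cyclic of order $2$, $4$ or $6$, so the order of any of its elements lies in $\{1,2,3,4,6\}$. Since $\operatorname{Aut}(E_1)$ contains the element $\bar g$ of order $m$, we conclude $m\in\{1,2,3,4,6\}$, and as $m\geq 2$ this gives $m\in\{2,3,4,6\}$. Finally, if $m\geq 3$ then $\bar g$ has order at least $3$ and in particular $\bar g\neq\pm 1$, so $E_1$ carries a non-trivial automorphism; transporting $\bar g$ by the transposition $(1\,i)\in S_n\subset G$ shows $E_i\cong E_1$, and hence every $E_i$ has non-trivial automorphisms as well.

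The argument is essentially direct, so there is no serious obstacle; the only points requiring care are verifying that the chosen $g$ genuinely lies in $G$ and that its restriction to $E_1$ is an honest automorphism of order $m$ rather than merely a self-map preserving the tangent line. The first is immediate from the defining condition on the exponents, and the second follows from $g$ being an automorphism of $A$ that preserves the abelian subvariety $E_1$, together with the faithfulness of the analytic representation. One might also worry about distinguishing the various reflection types of $G(m,p,n)$, but the whole point of working inside $G(m,m,n)$ is precisely to avoid any case analysis in $p$.
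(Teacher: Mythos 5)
Your proposal is correct and is essentially the same argument as the paper's: both use the element $(\zeta_m,\zeta_m^{-1},1,\ldots,1)\in H$ to produce an automorphism of $E_1$ of order $m$, invoke the classification of automorphisms of elliptic curves to force $m\in\{2,3,4,6\}$, and carry the conclusion to the other $E_i$ via the $S_n$-action. The only difference is that you spell out details the paper leaves implicit (membership in $H(m,m,n)$, faithfulness of the analytic representation), which is fine.
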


\begin{proof}
Consider the curve $E_1\subset A$ defined as above. We see then that the element $(\zeta_m,\zeta_m^{-1},1,\ldots,1)\in H$ induces an automorphism of order $m$ of $E_1$. Therefore $m\in\{2,3,4,6\}$ and, if $m\geq 3$, then $E_1$ has non-trivial automorphisms. The other $E_i$ are obtained from $E_1$ via the action of $S_n$ and hence are isomorphic to it.
\end{proof}

Now, let $\Lambda_A$ be a lattice for $A$ in $\mathbb{C}^n$. Then $\mathbb{C}e_i\cap\Lambda_A$ corresponds to the lattice of $E_i$ in $\C=\C e_i$. We can thus define the $G$-stable sublattice of $\Lambda_A$
$$\Lambda_B:=\bigoplus_{i=1}^n(\mathbb{C}e_i\cap\Lambda_A).$$
As in Section \ref{sect isogeny}, this defines a $G$-isogeny $\pi:B\to A$. Moreover, we see that $B\cong E_1\times\cdots\times E_n\cong E^n$ and that $\pi|_{E_i}$ is an injection. As in the previous section, let $\Delta$ be the kernel of $\pi$. We will study the different possible quotients $A/G$ by studying the possible quotients $B/(\Delta\rtimes G)$ and thus by studying the possible $\Delta$'s. Let us start with the case of a trivial $\Delta$:

\begin{proposition}\label{prop Gmpn p not 1}
Let $G=G(m,p,n)$ with $n\geq 2$ act on $B=E^n$ as above. Then the quotient $B/G$ is smooth if and only if $p=1$.
\end{proposition}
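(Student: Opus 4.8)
The statement is an equivalence, and the two directions are of quite different natures. For the implication $p=1\Rightarrow B/G$ smooth I would simply observe that $G(m,1,n)=\mu_m\wr S_n$ is exactly the group $C^n\rtimes S_n$ of Example \ref{ex1}, where $C=\langle\zeta_m\rangle\leq\aut(E)$ is the cyclic group of automorphisms of order $m$ provided by Lemma \ref{lem m leq 6}; since \cite{Auff} shows that this pair yields $B/G\cong\mathbb{P}^n$, the quotient is smooth. All the content is therefore in the converse, which I plan to prove in contrapositive form: if $p>1$, then $B/G$ is not smooth.

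By the Chevalley--Shephard--Todd Theorem it suffices to produce a single point $x\in B=E^n$ whose stabilizer $\stab_G(x)$ is \emph{not} generated by the pseudoreflections of $G$ that it contains. Recall that, acting on $T_0(B)=\mathbb{C}^n$, the pseudoreflections of $G(m,p,n)$ are of two kinds: the ``transposition type'' monomial involutions supported on two coordinates (present for every value of $p$), and the diagonal reflections $d_i^{(c)}:=\mathrm{diag}(1,\ldots,\zeta_m^{c},\ldots,1)$ in the $i$-th slot, which belong to $G$ only when $p\mid c$. The key idea is to choose a point with a \emph{single} nonzero coordinate, $x=(t,0,\ldots,0)$ with $t\in E\setminus\{0\}$. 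With this choice every element of $\stab_G(x)$ must fix the first coordinate, it being the only nonzero entry, so that ``multiplication on the first coordinate'' defines a homomorphism $\chi\colon\stab_G(x)\to\mu_m$. No transposition-type reflection involving the first coordinate can fix $x$ (it would force $t=\zeta_m^k\cdot 0=0$), and the diagonal reflections $d_1^{(c)}$ fixing $x$ are exactly those with $p\mid c$ and $\zeta_m^{c}t=t$; hence $\chi$ maps the subgroup generated by the pseudoreflections in $\stab_G(x)$ into $\langle\zeta_m^{c}:p\mid c,\ \zeta_m^{c}t=t\rangle$.

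To break this I would consider $g:=\mathrm{diag}(\zeta_m^{c},\zeta_m^{-c},1,\ldots,1)$. Its two nonzero exponents sum to $0$, so $g\in G(m,p,n)$ for every $c$; moreover $g$ fixes $x$ as soon as $\zeta_m^{c}t=t$, while $\chi(g)=\zeta_m^{c}$. Therefore $g$ witnesses non-smoothness precisely when one can pick $t\neq 0$ and $c$ with $\zeta_m^{c}t=t$ and $p\nmid c$, i.e.\ when $\fix(\zeta_m^{c})\neq 0$ for some $c$ with $p\nmid c$ and $\zeta_m^{c}\neq 1$. The whole problem thus reduces to this elementary existence question on $E$.

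The one genuine obstacle is that $\fix(\zeta_m^{c})=\ker(\zeta_m^{c}-1)$ can be trivial: this happens exactly when $\zeta_m^{c}$ has order $6$ (for instance $\fix(\zeta_6)=0$, since $\zeta_6-1$ is a unit), which is what makes $m=6$ the delicate case. I would settle the existence by a short case check over the four values $m\in\{2,3,4,6\}$ from Lemma \ref{lem m leq 6}, always selecting $c$ with $\mathrm{ord}(\zeta_m^{c})\in\{2,3,4\}$ and $p\nmid c$: one takes $c=1$ when $m\in\{2,3,4\}$, $c=3$ (so $\zeta_m^{c}=-1$ and $\fix=E[2]$) when $m=6$ and $p=2$, and $c=2$ (so $\zeta_m^{c}=\zeta_3$, with $\fix$ of order $3$) when $m=6$ and $p\in\{3,6\}$. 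In every case such a $t$ exists, producing the required point $x=(t,0,\ldots,0)$ and completing the contrapositive. This case analysis is the only place where the restriction $m\in\{2,3,4,6\}$, equivalently the existence of the relevant complex multiplication on $E$, is actually used.
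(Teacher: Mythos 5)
Your proof is correct, and it shares the paper's overall strategy (contrapositive via Chevalley--Shephard--Todd, with the $p=1$ direction handled identically through Example~\ref{ex1} and \cite{Auff}); in fact the essential data coincide: the paper's choices of $t$ in its proof are precisely fixed points of $\zeta_3$, of $\zeta_4$, or $2$-torsion points, matching yours, and its non-reflection witnesses are (up to multiplication by listed reflections) the same elements $(\zeta_m^c,\zeta_m^{-c},1,\ldots,1)$. Where you genuinely diverge is in the handling of the remaining coordinates and the control of the stabilizer. The paper takes $\bar x=(t,0,x_3,\ldots,x_n)$ with $(x_3,\ldots,x_n)$ \emph{general}, which forces $\stab_G(\bar x)\subset H$ and makes the stabilizer small enough to list explicitly, case by case, in a table; non-smoothness is then read off by checking that the listed non-reflection generator is not a product of the listed reflections. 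You instead take the tail to be zero, $x=(t,0,\ldots,0)$, so your stabilizer is large (it contains all permutations and all diagonal elements of $G$ supported on the last $n-1$ coordinates), but you never need to compute it: the character $\chi$ given by the action on the first coordinate sends every pseudoreflection of $\stab_G(x)$ into $\langle\zeta_m^{c'}: p\mid c',\ \zeta_m^{c'}t=t\rangle\subseteq\mu_{m/p}$, whereas $\chi(g)=\zeta_m^c\notin\mu_{m/p}$ because $p\nmid c$. Do spell out that last inclusion and exclusion (every generator is a power of $\zeta_m^p$, and $\zeta_m^c\in\mu_{m/p}$ forces $p\mid c$), since it is exactly what justifies your phrase ``$g$ witnesses non-smoothness precisely when one can pick $t\neq0$ and $c$''; it is a one-line check, not a gap. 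As for what each approach buys: yours is more uniform, replacing the paper's seven-row stabilizer computation by a single homomorphism argument plus the short selection of $(c,t)$ for each $m$; the paper's generic-tail device, though more computational here, is the technique that gets reused in Propositions~\ref{proposition Delta not special} and~\ref{proposition Delta not hyperplanar}, where a non-trivial kernel $\Delta$ is present and a clean first-coordinate character like yours is no longer available.
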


\begin{proof}
By Lemma \ref{lem m leq 6}, we know that $m\in\{2,3,4,6\}$. Thus, if $p=1$, the action of $G$ on $B$ is by construction the same as in Example \ref{ex1}, which tells us that $B/G\cong\bb P^n$ and hence it is smooth.\\

Assume now that $p\geq 2$. By Lemma \ref{lem m leq 6}, we also know that if $m\neq 2$ then $E$ has non-trivial automorphisms given by multiplication by $\zeta_m$. In particular, $E$ is a very specific curve in each of these cases and it is easy to see that:
\begin{itemize}
\item if $m=3,6$, then there exists a non-trivial $t\in E[3]$ such that $\zeta_6 t=-t$;
\item if $m=4$, then there exists a non-trivial $t\in E[2]$ such that $\zeta_4 t=t$.
\end{itemize}
Consider one such element $t\in E$ unless $(m,p)\in\{(2,2),(6,2)\}$, in which case take any non-trivial element $t\in E[2]$. Let $(x_3,\ldots,x_{n})\in E^{n-2}$ be a general element. Then, if $\bar x=(t,0,x_3,\ldots,x_{n})\in B=E^n$, we immediately see that an element in $\stab_G(\bar x)$ must be in $H\subset G$ since the coordinates cannot be permuted, even after applying automorphisms on some coordinates via $H$. A direct computation tells us then that $\stab_G(\bar x)$ is equal to the (abelian) subgroup of $H\subset G$ given in each case by the following table:\\

\begin{center}
\begin{tabular}{ c | l  }
$(m,p)$ & Generators of $\mbox{Stab}_G(\bar x)$\\\hline
(2,2) & $(-1,-1,1,\ldots,1)$\\
(3,3) & $(\zeta_3,\zeta_3^{-1},1,\ldots,1)$\\
(4,2) & $(\zeta_4,\zeta_4,1,\ldots,1)$, $(-1,1,1,\ldots,1)$, $(1,-1,1,\ldots,1)$\\
(4,4) & $(\zeta_4,\zeta_4^{-1},1,\ldots,1)$\\
(6,2) & $(-1,-1,1,\ldots,1)$, $(1,\zeta_3,1,\ldots,1)$\\
(6,3) & $(\zeta_3,\zeta_3^{-1},1,\ldots,1)$, $(1,-1,1,\ldots,1)$\\
(6,6) & $(\zeta_3,\zeta_3^{-1},1,\ldots,1)$
\end{tabular}
\end{center}

\vspace{1em}

However we observe that in all cases the first element is not a pseudoreflection, since its fixed locus is of codimension 2. Moreover, the only pseudoreflections in $\stab_G(\bar x)$ are the other given generators (and their powers) and hence they cannot generate the first one. Therefore, by the Chevalley-Shephard-Todd Theorem, the quotient $B/G$ is not smooth.
\end{proof}

Let us consider now the case of a non-trivial kernel $\Delta$. We start with an application of Proposition \ref{prop Delta and D}.

\begin{lemma}\label{lemma Delta diagonal or hyper}
If $\Delta$ is non-trivial, then $m\neq 6$ and, if we define the following type of elements in $\Delta$:
\begin{itemize}
\item Diagonal: $(t,\ldots,t)$ with $t\in E$;
\item Hyperplanar: $(t,-t,0,\ldots,0)$ with $t\in E$;
\end{itemize}
then $\Delta$ contains a non-trivial hyperplanar element \emph{unless} it consists purely of diagonal elements. Moreover, the coordinates of every hyperplanar element are invariant by $\zeta_m$, so in particular these elements are 2-torsion if $m=2,4$ and 3-torsion if $m=3$.
\end{lemma}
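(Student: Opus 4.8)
The plan is to exploit Proposition \ref{prop Delta and D}, which tells us that $\Delta$ lies in $D_{\tau\sigma\tau^{-1},B}$ for every pseudoreflection $\sigma$ and every $\tau\in G$, provided the injectivity of $E_{\sigma,B}\to E_{\sigma,A}$ and the surjectivity of $F_{\sigma,B}\to F_{\sigma,A}$ hold. First I would verify these two hypotheses in the present setting: by construction $\pi|_{E_i}$ is injective, so $E_{\sigma,B}\to E_{\sigma,A}$ is injective, and the surjectivity of $F_{\sigma,B}\to F_{\sigma,A}$ should follow from the torsion description in Lemma \ref{lemma Esigma Dsigma}, exactly as in the $S_{n+1}$ case. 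Once this is in place, the containment $\Delta\subset D_{\sigma,B}$ for \emph{all} pseudoreflections $\sigma$ becomes the main structural tool.

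**Extracting the shape of elements of $\Delta$.** Writing a general element of $\Delta$ as $(t_1,\ldots,t_n)\in E^n$, I would translate the condition $\Delta\subset D_{\sigma,B}=\ker(1-\sigma)^0$ for the pseudoreflections $\sigma$ coming from $S_n$ (transpositions) and from $H$. A transposition $(i\,j)$ forces $t_i=t_j$ on $\ker(1-\sigma)$, so the condition $\Delta\subset D_{(i\,j),B}$ constrains the difference $t_i-t_j$ to lie in the small finite group $F_{(i\,j),B}=E_{(i\,j),B}\cap D_{(i\,j),B}$. Combining these constraints over all transpositions should show that any element of $\Delta$ is, up to a diagonal element $(t,\ldots,t)$, supported on differences that are torsion and concentrated in few coordinates. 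More precisely, after subtracting the diagonal part, the residual element must lie in the intersection of the fixed divisors of the $H$-pseudoreflections as well; analyzing the pseudoreflections in $H$ of the form $(1,\ldots,\zeta_m,\ldots,\zeta_m^{-1},\ldots,1)$ should pin the nonzero entries down to a two-coordinate pattern $(t,-t,0,\ldots,0)$, i.e.\ a hyperplanar element. This is where the dichotomy ``purely diagonal, or contains a hyperplanar element'' emerges.

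**The $\zeta_m$-invariance and the torsion/exclusion conclusions.** For a hyperplanar element $(t,-t,0,\ldots,0)$, I would apply the containment $\Delta\subset D_{\sigma,B}$ for the specific pseudoreflection $\sigma\in H$ that multiplies the first coordinate by $\zeta_m$ (available when $m\geq 2$), together with its conjugates. Requiring $(t,-t,0,\ldots,0)\in\ker(1-\sigma)^0$ forces $\zeta_m t=t$, which is precisely the asserted $\zeta_m$-invariance of the coordinates. From $\zeta_m t = t$ one reads off that $t$ is killed by $1-\zeta_m$: for $m=2,4$ this gives $2t=0$ (using $(1+\zeta_m)t=0$ hence $2t=0$ after an auxiliary relation), and for $m=3$ it gives $3t=0$, matching the torsion claims; here I would use the explicit action of $\zeta_m$ on the special curves $E$ identified in Lemma \ref{lem m leq 6}. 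Finally, for $m=6$ the nontriviality must fail: since $\zeta_6 t=t$ forces $t$ fixed by an order-$6$ automorphism of $E$, and on a CM elliptic curve with $\aut=\mu_6$ the only such $t$ is $0$ (as $1-\zeta_6$ is an automorphism, so $\ker(1-\zeta_6)=0$), we conclude no nontrivial hyperplanar element exists, and a parallel argument on the diagonal elements rules those out too, giving $m\neq 6$.

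**Main obstacle.** The delicate point will be organizing the pseudoreflection conditions so that they simultaneously yield both the coordinate pattern (diagonal versus hyperplanar) and the $\zeta_m$-invariance, without case-splitting prematurely on $p$ or on $m$; in keeping with the paper's stated strategy I would phrase all arguments using only pseudoreflections lying in $G(m,m,n)\subset G(m,p,n)$, and I expect the hardest bookkeeping to be showing that no element of $\Delta$ can have \emph{three or more} nonzero differing coordinates once the diagonal part is removed, which is what ultimately forces the hyperplanar form. The exclusion of $m=6$ then falls out cleanly from the automorphism computation $\ker(1-\zeta_6)=0$ on $E$.
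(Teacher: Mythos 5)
Your proposal fails at its foundation. You claim that for \emph{every} pseudoreflection $\sigma$ the map $E_{\sigma,B}\to E_{\sigma,A}$ is injective ``by construction'' because $\pi|_{E_i}$ is injective. This conflates the coordinate curves $E_i$ (which are the curves $E_\tau$ only for the type (II) pseudoreflections $\tau\in H$) with the curves attached to transposition-type pseudoreflections: for $\sigma=(1\,2)$ one has $E_{\sigma,B}=\{(t,-t,0,\ldots,0)\}$, an anti-diagonal curve, and $\ker(\pi)\cap E_{\sigma,B}$ is exactly the set of hyperplanar elements of $\Delta$. So injectivity of $E_{\sigma,B}\to E_{\sigma,A}$ is \emph{equivalent} to the absence of non-trivial hyperplanar elements --- it is the dichotomy to be proven, not a hypothesis that holds automatically. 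Your unconditional application of Proposition \ref{prop Delta and D} would force $\Delta$ to be fixed by every transposition, hence diagonal, in all cases; this is false, as shown by $m=2$ and $\Delta=\{x\in E[2]^n:\sum_i x_i=0\}$, which is $G$-stable, contains no element supported on a single coordinate, and contains non-diagonal hyperplanar elements (indeed this $\Delta$ is exactly the one that survives for $G(2,2,3)$ in Proposition \ref{proposition Delta not hyperplanar}). The paper's proof is conditional where yours is not: \emph{if} $\Delta$ has no non-trivial hyperplanar element, \emph{then} $E_{\sigma,B}\to E_{\sigma,A}$ is an isomorphism, the hypotheses of Proposition \ref{prop Delta and D} hold, and $\Delta$ is diagonal.

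The second half of your argument also does not work as stated. The elements $(\zeta_m,1,\zeta_m^{-1},1,\ldots,1)$ are not pseudoreflections (their fixed locus has codimension $2$), and the pseudoreflections in $H$ that ``multiply one coordinate by $\zeta_m$'' exist only when $p=1$; in particular they are unavailable in $G(m,m,n)$, contrary to your own stated plan of working inside $G(m,m,n)$. Even granting such a pseudoreflection $\sigma$, containment in $D_{\sigma,B}$ (the \emph{identity component} of $\ker(1-\sigma)$, which here is $\{0\}\times E^{n-1}$) would force $t=0$, not $\zeta_m t=t$. The paper's mechanism is different and avoids all of this: since $\Delta$ is a $G$-stable subgroup, the endomorphism $1-\rho_1$ with $\rho_1=(\zeta_m,1,\zeta_m^{-1},1,\ldots,1)\in H(m,m,n)$ maps $\Delta$ into itself, sending a hyperplanar $(t,-t,0,\ldots,0)$ to $((1-\zeta_m)t,0,\ldots,0)$; the key structural fact --- which your write-up never isolates --- is that by construction of the root-lattice isogeny, $\Delta$ contains no non-zero element supported on a single coordinate, whence $(1-\zeta_m)t=0$. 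The same device (composing $(1-\rho_2)(1-\rho_1)\sigma_i$ and using that $(1-\zeta_6^{-1})(1-\zeta_6)=1$) kills \emph{every} element of $\Delta$ when $m=6$; your sketch handles only hyperplanar elements there and waves at diagonal ones with ``a parallel argument,'' which is not supplied and is not parallel.
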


\begin{proof}
Let $\sigma=(1\,2)$ and note that $(t,-t,0,\ldots,0)\in E_{\sigma,B}$. Then there being no non-trivial hyperplanar element in $\Delta$ amounts to $E_{\sigma,B}\to E_{\sigma,A}$ being an isomorphism. By inspection, we see that $F_{\sigma,B}=E_{\sigma,B}[2]$ and we can thus apply Proposition \ref{prop Delta and D}, which tells us that elements in $\Delta$ are invariant by \emph{every} transposition, hence diagonal.

Assume now that $\Delta$ contains a hyperplanar element $\bar t$. Then, since $\Delta$ is $G$-stable, we have that, for $\rho_1=(\zeta_m,1,\zeta_m^{-1},1,\ldots,1)\in H$,
\[(1-\rho_1)(\bar t)=((1-\zeta_m)t,0,\ldots,0)\in\Delta.\]
But, by construction, there are no elements of the form $(x,0,\ldots,0)$ in $\Delta$. We deduce then that $t$ is $\zeta_m$-invariant. The assertion on the torsion of its coordinates follows immediately.

Assume finally that $m=6$ and let $(t_1,\ldots,t_n)\in\Delta$. Define $\sigma_i=(1\, i)\in S_n\subset G$ and $\rho_2=(\zeta_6^{-1},\zeta_6,1,\ldots,1)\in H\subset G$. Then
\[[(1-\rho_2)(1-\rho_1)\sigma_i](\bar t)=(t_i,0,\ldots,0)\in\Delta,\]
which implies as above that $t_i=0$ and thus $\Delta=0$.
\end{proof}

Let us study now pseudoreflections in $\Delta\rtimes G$. Define the elements
\begin{align*}
\rho &:=(\zeta_m,\zeta_m^{-1},1,\ldots,1)\in H\subset G;\\
\sigma &:=(1\, 2)\in S_n\subset G;\\
\tau &:=(\zeta_m^p,1,\ldots,1)\in H\subset G.
\end{align*}
Then there are two types of pseudoreflections in $G$:\label{pseudoref in Delta G}
\begin{itemize}
\item[(I)] conjugates of $\rho^a\sigma$ for $0\leq a<\frac mp$;
\item[(II)] conjugates of powers of $\tau$ (these do not exist if $m=p$);
\end{itemize}
and the corresponding elliptic curves in $B$ are respectively:
\begin{align*}
E_{\rho^a\sigma}&=\{(x,-\zeta_m^ax,0,\ldots,0)\mid x\in E\};\\
E_{\tau}&=\{(x,0,0,\ldots,0)\mid x\in E\}.
\end{align*}
Note now that elements of the form $(x,0,\ldots,0)$ are not in $\Delta$ by construction of the isogeny $\pi:B\to A$. Using Lemmas \ref{main lemma} and \ref{lemma Delta diagonal or hyper}, we see then that pseudoreflections in $\Delta\rtimes G$ that are not in $G$ must be of the form
\begin{itemize}
\item[(III)] conjugates of $(\bar t,\rho^a\sigma)\in\Delta\rtimes G$ for $0\leq a<p$;
\end{itemize}
where $\bar t=(t,-t,0,\ldots,0)\in\Delta$ and $t$ is $\zeta_m$-invariant.\\

With these considerations, we can restrict further the structure of $\Delta$. For instance, diagonal elements in $\Delta$ are bound to bring problems since they do not belong to any elliptic curve $E_\upsilon$ for a pseudoreflection $\upsilon\in G$. Thus, they cannot bring up new pseudoreflections in $\Delta\rtimes G$ unless they are generated by hyperplanar elements. This is explained by the following proposition.

\begin{proposition}[$\Delta$ is not diagonal]\label{proposition Delta not diagonal}
Assume that there exists $s\in E$ such that $(s,\ldots,s)\in \Delta$ but $(s,-s,0,\ldots,0)\not\in\Delta$. Then $A/G$ is not smooth.
\end{proposition}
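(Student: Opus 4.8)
The plan is to exhibit a point of $B$ whose stabilizer in $\Delta\rtimes G$ fails to be generated by pseudoreflections, so that $A/G\cong B/(\Delta\rtimes G)$ cannot be smooth by Chevalley--Shephard--Todd. First note that $s\neq 0$, since otherwise $(s,-s,0,\ldots,0)=0\in\Delta$. Put $\bar s=(s,\ldots,s)\in\Delta$. By Lemma \ref{lem surj} there is $\kappa\in G$ with $(1-\kappa)$ surjective, so there exists $z\in B$ with $(1-\kappa)(z)=\bar s$; then the element $(\bar s,\kappa)\in\Delta\rtimes G$, which acts by $x\mapsto\kappa(x)+\bar s$, fixes $z$. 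Assuming for contradiction that $A/G$ is smooth, $\stab_{\Delta\rtimes G}(z)$ must be generated by pseudoreflections of $\Delta\rtimes G$, and in particular $(\bar s,\kappa)$ is a product of such pseudoreflections, each of type (I), (II) or (III) from the classification on page \pageref{pseudoref in Delta G}.

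Next I would track the ``translation parts'' of these pseudoreflections. Let $\Delta_h\leq\Delta$ be the subgroup generated by all hyperplanar elements of $\Delta$ together with their $G$-conjugates; it is $G$-stable by construction. Pseudoreflections of types (I) and (II) lie in $G$ and hence have trivial translation part, while a type (III) pseudoreflection $(\bar t,\rho^a\sigma)$ has $\bar t$ hyperplanar, so $\bar t\in\Delta_h$. For a conjugate $(\bar u,h)(\bar t,\upsilon)(\bar u,h)^{-1}$ a direct computation gives translation part $(1-h\upsilon h^{-1})\bar u+h\bar t$; here $h\bar t\in\Delta_h$ since $\Delta_h$ is $G$-stable, and $(1-h\upsilon h^{-1})\bar u$ lies in $\Delta\cap E_{h\upsilon h^{-1},B}$, which by Lemma \ref{lemma Delta diagonal or hyper} consists of hyperplanar elements of $\Delta$ and is therefore contained in $\Delta_h$. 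Thus every pseudoreflection of $\Delta\rtimes G$ has translation part in $\Delta_h$; since the group law $(\bar a_1,g_1)\cdots(\bar a_l,g_l)=\big(\sum_i g_1\cdots g_{i-1}\,\bar a_i,\ g_1\cdots g_l\big)$ keeps translation parts inside the $G$-stable set $\Delta_h$, any product of pseudoreflections has translation part in $\Delta_h$. Applying this to $(\bar s,\kappa)$ yields $\bar s\in\Delta_h$. I expect this verification that translation parts stay within $\Delta_h$ under conjugation (the appeal to Lemma \ref{lemma Delta diagonal or hyper} to force $\zeta_m$-invariance) to be the main technical obstacle.

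Finally I would derive the contradiction from $\bar s\in\Delta_h$. Set $S_E:=\{t\in E\mid (t,-t,0,\ldots,0)\in\Delta\}$; this is a subgroup of $E$ because $\Delta$ is a subgroup. By Lemma \ref{lemma Delta diagonal or hyper} every hyperplanar element of $\Delta$ has $\zeta_m$-invariant coordinates, so each such generator of $\Delta_h$ is a placement of $(t,-t)$ with $t\in S_E$; hence every coordinate of every generator of $\Delta_h$ lies in $S_E\cup\{0\}\subseteq S_E$, and since $S_E$ is a subgroup, every coordinate of every element of $\Delta_h$ lies in $S_E$. Reading off the first coordinate of $\bar s\in\Delta_h$ gives $s\in S_E$, that is, $(s,-s,0,\ldots,0)\in\Delta$, contradicting the hypothesis. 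Therefore $\stab_{\Delta\rtimes G}(z)$ is not generated by pseudoreflections and $A/G$ is not smooth.
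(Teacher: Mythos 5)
Your proof is correct and takes essentially the same route as the paper's: both use Lemma \ref{lem surj} to produce a point whose stabilizer contains an element of the form $(\bar s,\kappa)$, then show such an element cannot be a product of pseudoreflections because the translation parts of type (III) pseudoreflections are ($G$-conjugates of) hyperplanar elements with $\zeta_m$-invariant coordinates, so that reading off the first coordinate of $\bar s$ would force $(s,-s,0,\ldots,0)\in\Delta$. Your explicit bookkeeping of translation parts under conjugation and the subgroup $S_E$ is slightly more detailed than the paper, which simply asserts that $\bar s$ lies in the sub-$G$-module of $\Delta$ generated by the hyperplanar $\bar t_i$, but the content is identical.
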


In particular, we see that $\Delta$ has to contain at least one hyperplanar element.

\begin{proof}
Since $A/G\cong B/(\Delta\rtimes G)$, we will work with this last quotient using the Chevalley-Shephard-Todd Theorem.

Let $\bar s\in\Delta$ denote the diagonal element in the statement of the Proposition. We will prove first that an element of the form $(\bar s,\upsilon)$ cannot be generated by pseudoreflections in $\Delta\rtimes G$. Indeed, the only pseudoreflections that are not in $G$ are those of type (III), so that if $(\bar s,\upsilon)$ was generated by pseudoreflections, we should be able to write
\begin{equation}\label{eq diagonal delta}
\bar s=\sum_{i=1}^\ell \upsilon_i(\bar t_i),
\end{equation}
with $\bar t_i=(t_i,-t_i,0\ldots,0)\in\Delta$ a hyperplanar element and $\upsilon_i\in G$. In particular, $\bar s$ would be contained in the sub-$G$-module of $\Delta$ generated by the $\bar t_i$. But since $t_i$ is $\zeta_m$-invariant, the only way in which $G$ acts on the $\bar t_i$ is by permuting their coordinates. Thus, by looking at the first coordinate in equation \eqref{eq diagonal delta}, we get that $s$ is a linear combination of the $t_i$, which implies immediately that $(s,-s,0,\ldots,0)$ is a linear combination of the $\bar t_i$ and hence is in $\Delta$, contradicting our hypothesis.

Having proved this, it suffices then to exhibit an element $\bar x\in B$ such that its stabilizer in $\Delta\rtimes G$ has an element of the form $(\bar s,\upsilon)$. In other words, we need $\upsilon\in G$ and $\bar x\in B$ such that $\upsilon(\bar x)+\bar s=\bar x$, and this is a direct consequence of Lemma \ref{lem surj}.
\end{proof}

Denote by $E_0$ the subgroup of $\zeta_m$-invariant elements of $E$. Then $E_0$ is equal to $E[2]\cong (\Z/2\Z)^2$ if $m=2$, isomorphic to $\Z/3\Z$ if $m=3$ and isomorphic to $\Z/2\Z$ if $m=4$. Now that we know that diagonal elements in $\Delta$ only appear if generated by hyperplanar elements, Lemma \ref{lemma Delta diagonal or hyper} tells us that $\Delta$ is contained in $E_0^n=B^H$, and more precisely in the ``hyperplane''
\[\left\{(x_1,\ldots,x_n)\in E_0^n\mid \sum_{i=1}^nx_i=0\right\}\subset E_0^n\subset B.\]
Indeed, for $m=3,4$ the mere presence of a hyperplanar element implies by $G$-stability that $\Delta$ is actually the whole ``hyperplane'' and thus the presence of any additional element in $\Delta$ would imply the existence of elements of the form $(x,0,\ldots,0)$, which is forbidden by construction. A similar argument using Proposition \ref{proposition Delta not diagonal} works for $m=2$. In this last case, one hyperplanar element does not suffice to generate the whole hyperplanar subgroup of $E_0^n$ since $E_0=E[2]$ needs two generators. We prove now that such an ``incomplete'' hyperplanar $\Delta$ does not work either.

\begin{proposition}\label{proposition Delta not special}
Assume that $m=2$, $\Delta\neq\{0\}$ and there exists a hyperplanar 2-torsion element that is not in $\Delta$. Then $A/G$ is not smooth.
\end{proposition}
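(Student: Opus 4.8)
The plan is to produce, exactly as in Propositions \ref{prop Gmpn p not 1} and \ref{proposition Delta not diagonal}, a single point of $B$ whose stabilizer in $\Delta\rtimes G$ is non-trivial but contains \emph{no} pseudoreflection, so that the Chevalley--Shephard--Todd Theorem forbids smoothness of $A/G\cong B/(\Delta\rtimes G)$. First I would collect the reductions already available. Since $\Delta\subseteq E_0^n=E[2]^n$ and $H$ acts trivially on $2$-torsion, the only part of $G$ that moves $\Delta$ is the factor $S_n$ permuting coordinates; moreover, as $p\mid m=2$ and $p=1$ is Example \ref{ex1} (treated via Proposition \ref{prop Gmpn p not 1}), the content is for $p=2$, i.e.\ $G=G(2,2,n)$, where crucially $H=H(2,2,n)$ contains no single-coordinate sign change and $\rho=(-1,-1,1,\dots,1)\in H$ is \emph{not} a pseudoreflection. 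Finally, the hypothesis gives a hyperplanar element $t_0(e_i+e_j)\notin\Delta$; since $\Delta$ is $S_n$-stable and $S_n$ acts transitively on pairs, the whole orbit $\{t_0(e_k+e_l)\}_{k<l}$ avoids $\Delta$. Thus there is a full ``color'' $t_0\in E[2]\setminus\{0\}$ carrying no hyperplanar element of $\Delta$.

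Next I would test the point $\bar x=(t_0,0,x_3,\dots,x_n)\in B=E^n$ with $x_3,\dots,x_n$ chosen generically (avoiding the finitely many relations $x_i\equiv\pm x_j\pmod{E[2]}$ for $i\neq j$, and $x_i\notin E[4]$), and compute $\stab_{\Delta\rtimes G}(\bar x)$ by hand. An element $(\bar s,(\eta,w))$ fixing $\bar x$ satisfies $x_i=\eta_i x_{w^{-1}(i)}+s_i$ with each $s_i\in E[2]$. Genericity of $x_3,\dots,x_n$ forces $w$ to preserve $\{1,2\}$ and to fix $\{3,\dots,n\}$ pointwise, with $\eta_i=1$ and $s_i=0$ there, so $w\in\{1,(1\,2)\}$. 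If $w=1$, the $2$-torsion of $x_1=t_0$ and $x_2=0$ forces $s_1=s_2=0$ and leaves $\eta=(\eta_1,\eta_2,1,\dots,1)$; the membership $\eta\in H(2,2,n)$ (an even number of $-1$'s) gives $\eta_1=\eta_2$, whence $(\bar s,(\eta,w))\in\{1,\rho\}$. If instead $w=(1\,2)$, the equations on the first two coordinates force $s_1=s_2=t_0$, i.e.\ the translation part would have to be the $t_0$-hyperplanar element $t_0(e_1+e_2)$, which by the previous paragraph is not in $\Delta$ --- a contradiction. Hence $\stab_{\Delta\rtimes G}(\bar x)=\langle\rho\rangle\cong\Z/2\Z$.

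Finally I would observe that $\rho$, viewed as $(0,\rho)\in\Delta\rtimes G$, is not a pseudoreflection: by Lemma \ref{main lemma} an element with trivial translation part is a pseudoreflection only if its image in $G$ is one, whereas $\rho\in H$ fixes the codimension-$2$ locus $\{x_1,x_2\in E[2]\}$. Therefore $\stab(\bar x)=\langle\rho\rangle$ is a non-trivial group containing no pseudoreflection, and the Chevalley--Shephard--Todd Theorem shows $B/(\Delta\rtimes G)=A/G$ is singular. The crux, and the step most in need of care, is the stabilizer computation: one must verify that the missing color really removes from the stabilizer every type-(III) pseudoreflection (each of which carries a hyperplanar translation lying in $\Delta$), and that the absence of single-coordinate sign reflections when $p=2$ is precisely what prevents $\rho$ from being expressed through pseudoreflections fixing $\bar x$ --- this is exactly where the three hypotheses ``$m=2$'', ``$p\neq 1$'', and ``a hyperplanar element is missing'' each enter. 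The contrast with the full-hyperplane case is instructive: if every color appeared in $\Delta$, the two type-(III) reflections $\bigl(t_0(e_1+e_2),\sigma\bigr)$ and $\bigl(t_0(e_1+e_2),\rho\sigma\bigr)$ would both fix $\bar x$ and their product would recover $\rho$, so that this argument, correctly, would no longer apply.
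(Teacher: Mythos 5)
Your computation for $G=G(2,2,n)$ is correct, but the opening reduction --- dismissing $p=1$ on the grounds that ``$p=1$ is Example \ref{ex1} (treated via Proposition \ref{prop Gmpn p not 1})'' --- is a genuine gap. Proposition \ref{prop Gmpn p not 1} is a statement about the quotient $B/G$, i.e.\ about the case $\Delta=\{0\}$; it says nothing about $A/G\cong B/(\Delta\rtimes G)$ when $\Delta\neq\{0\}$, which is exactly the situation of the present proposition. The pair $(A,G(2,1,n))$ with $A=B/\Delta$ and $\Delta\neq\{0\}$ is \emph{not} Example \ref{ex1} --- the whole point of the proposition is to show its quotient is singular --- and the proposition carries no hypothesis ``$p\neq 1$''. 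The $p=1$ case is moreover needed downstream: the logic of Section \ref{sec Gmpn} is to prove $\Delta=\{0\}$ for \emph{every} $p\mid m$ and only then invoke Proposition \ref{prop Gmpn p not 1} to force $p=1$; if the present proposition were proved only for $p=2$, the case $G=G(2,1,n)$, $\Delta\neq\{0\}$ would remain open and the implication $(1)\Rightarrow(4)$ of Theorem \ref{classification} would not follow.

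The issue is not merely one of citation: your chosen point genuinely fails when $p=1$. At $\bar x=(t_0,0,x_3,\dots,x_n)$ the first two coordinates are $2$-torsion, so the single-coordinate sign changes $\tau_1=(-1,1,\dots,1)$ and $\tau_2=(1,-1,1,\dots,1)$ --- which lie in $G(2,1,n)$ and are pseudoreflections of type (II) --- both fix $\bar x$. One then finds $\stab_{\Delta\rtimes G}(\bar x)=\{1,\tau_1,\tau_2,\rho\}$ (the case $w=(1\,2)$ is still excluded by the missing color), and since $\rho=\tau_1\tau_2$ this stabilizer \emph{is} generated by pseudoreflections, so Chevalley--Shephard--Todd gives no obstruction at this point. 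This is precisely why the paper chooses a different point: it takes the \emph{present} color $t\in E[2]$ (which exists by Proposition \ref{proposition Delta not diagonal}), lifts it to $t_1\in E[4]$ with $2t_1=t$, sets $t_2=t_1+s$ with $s$ the missing color, and uses $\bar x=(t_1,t_2,x_3,\dots,x_n)$. Since $t_1,t_2$ have order $4$, \emph{no} element of $G$ fixes $\bar x$ (in particular no type-(II) reflection, whatever $p$ is), the type-(III) candidates $(\bar t,\sigma)$ and $(\bar t,\rho\sigma)$ fail because $t_1\neq\pm t_2$, and yet $(\bar t,\rho)$ fixes $\bar x$; this works uniformly for $p\in\{1,2\}$. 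If you replace your point by this one (or otherwise handle $p=1$), the rest of your argument stands.
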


\begin{proof}
As before, we can use the Chevalley-Shephard-Todd Theorem on the quotient $B/(\Delta\rtimes G)\cong A/G$.

By the previous Proposition, we may assume that $\Delta$ has a non-trivial element $\bar t=(t,t,0,\ldots,0)\in\Delta$ with $t\in E[2]$. But since $E[2]\cong (\Z/2\Z)^2$, we easily see from the hypothesis that there are no elements of the form $(s,s,0,\ldots,0)$ for $s\neq 0,t$.

Let $s\in E[2]$ be such an element. Let $t_1\in E[4]$ be such that $2t_1=t$ and let $t_2=t_1+s\in E[4]$. Let $(x_3,\ldots,x_n)\in E^{n-2}$ be a general element and consider the element $\bar x=(t_1,t_2,x_3,\ldots,x_n)\in B$. Recalling the notations given in page \pageref{pseudoref in Delta G}, it is easy to see that $(\bar t,\rho)\in\Delta\rtimes G$ fixes $\bar x$. Since $t_1\neq\pm t_2$, it is also easy to see that no element in $G$ fixes $\bar x$, so that pseudoreflections fixing $\bar x$ can only be of type (III), that is either $(\bar t,\sigma)$ or $(\bar t,\rho\sigma)$. But again, since $t_1\neq\pm t_2$, we see that neither of these fixes $\bar x$. Thus, $\stab_{\Delta\rtimes G}(\bar x)$ is not generated by pseudoreflections and hence $B/(\Delta\rtimes G)$ cannot be smooth by the Chevalley-Shephard-Todd Theorem.
\end{proof}

Thus, we are reduced to the ``full'' hyperplanar case, that is,
\begin{equation}\label{eq hyper delta}
\Delta:=\left\{(x_1,\ldots,x_n)\in E_0^n\mid \sum_{i=1}^nx_i=0\right\}\subset E_0^n\subset B.
\end{equation}
We prove then the following:

\begin{proposition}[$\Delta$ is not hyperplanar]\label{proposition Delta not hyperplanar}
Assume that $\Delta$ is as in \eqref{eq hyper delta}. Then $A/G$ is not smooth \textbf{except} if $G=G(2,2,3)$.
\end{proposition}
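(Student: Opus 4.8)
The plan is to recast smoothness in a form that removes the translation parts of $\Delta\rtimes G$, and then, in every case other than $G(2,2,3)$, to produce a single point whose stabilizer violates the Chevalley--Shephard--Todd criterion.

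First I would record the following reformulation. Since translations act trivially on tangent spaces, an element $(t,g)\in\Delta\rtimes G$ acts on $T_{\bar x}B$ through its image $g\in G$; moreover, if $(t,g)\in\stab_{\Delta\rtimes G}(\bar x)$ then $t=(1-g)(\bar x)\in E_{g,B}$, so by Lemma \ref{main lemma} it is a pseudoreflection of $\Delta\rtimes G$ exactly when $g$ is a pseudoreflection of $G$. The projection to $G$ is therefore injective on the stabilizer and identifies it with
\[\bar G_{\bar x}:=\{g\in G\mid (1-g)(\bar x)\in\Delta\}\leq G.\]
By the Chevalley--Shephard--Todd Theorem, $A/G\cong B/(\Delta\rtimes G)$ is smooth if and only if for \emph{every} $\bar x\in B$ the group $\bar G_{\bar x}$ is generated by the pseudoreflections of $G$ it contains. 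To prove non-smoothness it then suffices to exhibit one $\bar x$ and a non-pseudoreflection $g\in\bar G_{\bar x}$ that is not a product of pseudoreflections lying in $\bar G_{\bar x}$.

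The technique I would use is \emph{coordinate isolation}. Fixing all but two or three leading coordinates of $\bar x$ to be generic points of $E$ forces every element of $\bar G_{\bar x}$ to preserve those coordinates and act trivially on them, localizing the problem. I would then choose the remaining coordinates and a diagonal $g\in\bar G_{\bar x}$ so that $g$ acts nontrivially on one distinguished coordinate $k$, while no pseudoreflection in $\bar G_{\bar x}$ does; since a type (I) reflection meets exactly two coordinates and a type (II) reflection exactly one, this only requires ruling out the finitely many type (I) reflections involving $k$ and the type (II) reflections on $k$. For $m=2$ I would take $g=-\mathrm{id}$ supported on three coordinates when $p=1$ and on four when $p=2$, with the values $2x_i$ running over the nonzero $2$-torsion points, respectively over all of $E[2]$; distinctness of these values and $x_i\notin E[2]$ where needed make $\bar G_{\bar x}$ contain \emph{no} pseudoreflection at all, while $g\neq 1$ lies in it (the four-coordinate $-\mathrm{id}$ belongs to $G(2,2,n)$ precisely because four sign changes are even, which is why $n\geq 4$ is needed). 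For $m=3$ I would take $g=\mathrm{diag}(\zeta_3,\zeta_3,\zeta_3)$ on three coordinates whose classes in $E[3]/E_0$ are $0,1,2$, the coordinate of class $2$ being isolated because pair reflections require equal classes and type (II) reflections there require membership in $E_0$. For $m=4$ I would take $\bar x=(0,\,t',\,x_3,\,\ast,\dots,\ast)$ with $x_3\in E[4]\setminus E[2]$ and $2x_3$ equal to the nonzero $\zeta_4$-invariant $2$-torsion point, $t'$ being one of the two $2$-torsion points not fixed by $\zeta_4$; then $g=\mathrm{diag}(\zeta_4,\zeta_4,-1)$ gives $(1-g)(\bar x)\in\Delta$, and the third coordinate is isolated for all $p\mid 4$ since $x_3\notin E[2]$ kills the type (II) reflections there and its $\zeta_4$-orbit staying off $E[2]$ kills every pair reflection meeting it. In each case $g$ moves the distinguished coordinate while no pseudoreflection of $\bar G_{\bar x}$ does, so $\bar G_{\bar x}$ is not a reflection subgroup.

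The main obstacle will be exactly this isolation step: one must keep $g$ inside $\bar G_{\bar x}$ while simultaneously separating a coordinate it moves from \emph{all} pseudoreflections, and this is delicate when $p$ is small, so that type (II) reflections proliferate, and when $m=4$, where $E_0$ has only two elements and one is forced to use the arithmetic of the isogenies $1-\zeta_m$ on torsion (e.g.\ $\ker(1-\zeta_4)=E_0$, $\ker(1-\zeta_4^2)=E[2]$) to pry the relevant points apart. Finally, I would explain why the construction provably collapses for $G(2,2,3)$: with $n=3$ the four-coordinate element $-\mathrm{id}$ is unavailable (three sign changes are odd, so $-\mathrm{id}\notin G(2,2,3)$), and a finite check shows that every rotation $\mathrm{diag}(-1,-1,1)$ or $3$-cycle that can arise in $\bar G_{\bar x}$ is automatically accompanied by the two (signed) transpositions that generate it. Hence $\bar G_{\bar x}$ is always a reflection subgroup there, in agreement with the smoothness of $A/G\cong\mathbb{P}^3$ obtained from the identification $G(2,2,3)\cong S_4$ with Example \ref{ex2}.
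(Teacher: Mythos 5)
Your proposal is correct and takes essentially the same route as the paper: exhibit a point whose torsion coordinates lie in distinct $(E_0\times\mu_m)$-orbits (your witnesses and the diagonal element $g$ for $m=3,4$, and for $m=2$ up to reshuffling, are exactly the paper's) so that its stabilizer in $\Delta\rtimes G$ contains a non-reflection that the reflections in the stabilizer cannot generate, then conclude by Chevalley-Shephard-Todd. The only cosmetic differences are your clean $\bar G_{\bar x}$ reformulation and the single-coordinate isolation argument in place of the paper's explicit stabilizer tables, plus a slightly different case split for $m=2$ (three coordinates for all $p=1$, $n\geq 3$, versus the paper's separate treatment of $G(2,1,3)$).
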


\begin{proof}
As always, it will suffice to give an element $\bar x\in B=E^n$ such that its stabilizer in $\Delta\rtimes G$ is not generated by pseudoreflections. The idea, as in the last proof, is to exhibit an element whose coordinates $x_i$ are ``different enough'' so that it is clear that elements in $S_n\subset G$ cannot appear in $\stab_{\Delta\rtimes G}(\bar x)$, even after being mixed up with elements of $\Delta\times H\subset \Delta\rtimes G$. This amounts to ensuring that different coordinates do not belong to the same $(E_0\times\mu_m)$-orbit (this is how $\Delta\times H$ acts on coordinates). Then the stabilizer must be contained in $\Delta\times H$ and hence it is easy to exhibit examples that are not generated by pseudoreflections.

Consider then the following element $\bar x\in B$:
\begin{itemize}
\item If $G=G(2,p,n)$ and $n\geq 4$, then $\bar x=(0,a',b',c',x_5,\ldots,x_n)$.\\
Here, $(x_5,\ldots,x_n)\in E^{n-4}$ is a general element and $2a'=a$, $2b'=b$, $2c'=c$, where $E[2]=\{0,a,b,c\}$.
\item If $G=G(2,1,3)$, then $\bar x=(a',b',c')$, where $a',b',c'$ are as above.
\item If $G=G(3,p,n)$, then $\bar x=(0,d,2d,x_4,\ldots,x_n)$.\\
Here $(x_4,\ldots,x_n)\in E^{n-3}$ is a general element and $d\in E[3]$ is not $\zeta_3$-invariant.
\item If $G=G(4,p,n)$, then $\bar x=(0,d,e',x_4,\ldots,x_n)$.\\
Here, $(x_4,\ldots,x_n)\in E^{n-3}$ is a general element, $d$ and $e=2e'$ are in $E[2]$, $d$ is not $\zeta_4$-invariant and $e$ is $\zeta_4$-invariant.
\end{itemize}
The fact that these coordinates are in different $(E_0\times\mu_m)$-orbits is seen as follows. In the first two cases, multiplication by 2 kills the actions of $E_0$ and $\mu_2$ on 4-torsion elements and the coordinates are still all different. In the third case, the action of $\zeta_3$ on $d$ is by translation by a $\zeta_3$-invariant element (say, $e$), so $E_0$ and $\mu_3$ act in the same way on $d$. A direct computation tells us then that $0$, $d$ and $2d$ are in different $(E_0\times\mu_3)$-orbits. In the fourth case, all coordinates have different torsion.

Thus, $\stab_{\Delta\rtimes G}(x)\subset\Delta\times H$ as it was explained above. And easy direct computations in $\Delta\times H$ tell us that the stabilizer of $\bar x$ is given in each case by:
\begin{center}
\begin{tabular}{p{.2\textwidth}|p{.65\textwidth}}
$(m,p,n)$ & Generators of $\stab_{\Delta\rtimes G}(\bar x)\subset \Delta\times H$ \\ \hline
$(2,p,n)$, $n\geq 4$ & $((0,a,b,c,0,\ldots,0),(-1,-1,-1,-1,1,\ldots,1))$ \\
& $(\bar 0,(-1,1,\ldots,1))$ \hfill (exists only if $p=1$).\\ \hline
$(2,1,3)$ & $((a,b,c),(-1,-1,-1))$\\ \hline
$(3,p,n)$ & $((0,2e,e,0,\ldots,0),(\zeta_3,\zeta_3,\zeta_3,1,\ldots,1))$\\
& $(\bar 0,(\zeta_3,1,\ldots,1))$ \hfill (exists only if $p=1$)\\ \hline
$(4,p,n)$ & $((0,e,e,0,\ldots,0),(\zeta_4,\zeta_4,-1,1,\ldots,1))$\\
& $(\bar 0,(-1,1,\ldots,1))$ \hfill (exists only if $p\leq 2$)\\
& $(\bar 0,(\zeta_4,1,\ldots,1))$ \hfill (exists only if $p=1$)
\end{tabular}
\end{center}
In every case, the first element is clearly not a pseudoreflection and it cannot be generated by the others, which proves the proposition.
\end{proof}

The statement of the last proposition hints that the quotient $A/G$ is indeed smooth for $G=G(2,2,3)$. This is actually the case, since it is well-known that $G(2,2,3)$ is isomorphic, as a complex reflection group, to $S_4$ and was therefore already considered in the previous section. The proof of $(1)\Rightarrow(4)$ is now complete.

\subsection{Sporadic groups}\label{sec sporadic}
We deal now with complex reflection groups that are not of the type $G(m,p,n)$. As we recalled before, these are 34 sporadic groups with given actions on $\C^n$ where $n$ varies from 2 to 8.

Let $G$ be such a sporadic group. Recall that having an abelian variety $A$ with an action of $G$ by automorphisms gives us in particular a linear action of $G$ on $T_0(A)\cong\C^n$ that preserves the lattice $\Lambda=\Lambda_A$. We need then some sort of classification of $G$-invariant lattices up to equivalence. A great part of this work was done by Popov in \cite{Popov}, where he studied infinite complex reflection groups, in particular \emph{crystallographic} complex reflection groups, which turn out to be extensions of a finite complex reflection group $G$ by some lattice $\Lambda$ in $\C^n$, where the action of $G$ on $\C^n$ is the one given by Shephard-Todd. In order to deal with sporadic groups, we use then some of Popov's results, which we briefly recall here.\\

First of all, we need the notion of \emph{root lattice}. Given a finite (irreducible) complex reflection group $G$, we can consider the directions on which the pseudoreflections act. With these one can define an actual (irreducible) root system which in turn is useful for classifying these groups (cf.~\cite[\S1]{Popov}). Here, we only care about the lines generated by these roots, that is the eigenspaces of eigenvalue $\neq 1$ for some pseudoreflection $\sigma\in G$, which Popov calls \emph{root lines}. If we consider a $G$-invariant lattice $\Lambda\subset\C^n$, then the sublattices $\Lambda\cap L$ for $L$ a root line generate a $G$-invariant sublattice $\Lambda^0$ of $\Lambda$ called the \emph{root lattice} of $\Lambda$. Note that this is precisely how we constructed the $G$-equivariant isogeny $B\to A$ for $G=G(1,1,n+1)=S_{n+1}$.

We have then the following result, cf.~\cite[\S2.6]{Popov}:

\begin{theorem}[Popov]
The only sporadic groups $G$ in the list of Shephard-Todd that admit a $G$-invariant lattice are the numbers 4, 5, 8, 12, 24--26, 28, 29, 31--37. Their corresponding root lattices are classified up to equivalence by the table in \cite[\S2.6, pp.~37--44]{Popov}.
\end{theorem}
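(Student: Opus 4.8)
The plan is to turn the existence of a $G$-invariant lattice into a statement purely about the field of character values of the reflection representation, and then to read the answer off the Shephard--Todd data. The starting observation is that a finite subgroup $G\subset\mathrm{GL}_n(\C)$ preserves a full lattice $\Lambda\cong\Z^{2n}$ in $\C^n\cong\R^{2n}$ if and only if the underlying real representation $\rho_\R$ is realizable over $\Q$: a $G$-stable lattice provides a $\Z$-basis in which $G$ acts by integral matrices, and conversely, if $\rho_\R$ is conjugate into $\mathrm{GL}_{2n}(\Q)$ then the $\Z$-span of the $G$-orbit of a generic vector is a $G$-stable lattice. So the whole question becomes: for which sporadic $G$ is $\rho_\R$ defined over $\Q$?

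First I would settle this rationality question in terms of the character field $k_G=\Q(\chi)$ of the reflection character $\chi$. Since $\rho_\R\otimes_\R\C\cong\rho\oplus\overline{\rho}$, asking that $\rho_\R$ be rational is the same as asking that $\rho\oplus\overline{\rho}$ be defined over $\Q$. Now $k_G$ is an abelian extension of $\Q$ (a subfield of a cyclotomic field), hence Galois and either totally real or CM, and by a theorem of B\'enard and Bessis the reflection representation has Schur index $1$ over $k_G$. Consequently the $\Q$-irreducible representation containing $\rho$ is the whole Galois orbit $\bigoplus_{s\in\mathrm{Gal}(k_G/\Q)}\rho^{s}$, of complex dimension $n\cdot[k_G:\Q]$, and any $\Q$-form whose complexification contains $\rho$ must contain this entire orbit. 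Because complex conjugation lies in $\mathrm{Gal}(k_G/\Q)$, the conjugate $\overline{\rho}$ is one of the $\rho^{s}$; thus $\rho\oplus\overline{\rho}$ is the full Galois orbit -- and hence rational -- exactly when $[k_G:\Q]\le 2$ with complex conjugation as the nontrivial automorphism, that is, when $k_G=\Q$ or $k_G$ is imaginary quadratic. In every remaining case ($k_G$ real of degree $>1$, or of degree $>2$) the orbit is strictly larger than $\rho\oplus\overline{\rho}$, so no $\Q$-form, and no invariant lattice, can exist.

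With this criterion in hand the second step is bookkeeping: I would run through the $34$ sporadic groups and record $k_G$ for each from the Shephard--Todd tables \cite{ST}. The groups with $k_G=\Q$ are the Weyl groups $G_{28}=F_4$, $G_{35}=E_6$, $G_{36}=E_7$, $G_{37}=E_8$; those with $k_G=\Q(\zeta_3)=\Q(\sqrt{-3})$ are $G_4,G_5,G_{25},G_{26},G_{32},G_{33},G_{34}$; those with $k_G=\Q(i)$ are $G_8,G_{29},G_{31}$; finally $G_{12}$ has $k_G=\Q(\sqrt{-2})$ and $G_{24}$ has $k_G=\Q(\sqrt{-7})$. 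All other sporadic groups -- for instance $G_6,G_9$ (degree-$4$ fields), the $\zeta_5$-family $G_{16}$--$G_{22}$, and $G_{23}=H_3$, $G_{30}=H_4$ with the real field $\Q(\sqrt5)$ -- have a field of definition that is either real of degree $>1$ or complex of degree $>2$, hence admit no invariant lattice. This produces exactly the list $4,5,8,12,24$--$26,28,29,31$--$37$.

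The final and genuinely laborious step is the classification of the invariant lattices up to equivalence for each surviving group, which is the content of Popov's tables \cite{Popov}. Here one works over the relevant ring of integers -- $\Z$, $\Z[i]$, $\Z[\zeta_3]$, $\Z[\sqrt{-2}]$ or $\Z\bigl[\tfrac{1+\sqrt{-7}}{2}\bigr]$ -- over which $\rho$ is realized, and enumerates the $G$-stable modules, equivalently the $G$-stable overlattices of the root lattice, up to isomorphism and scaling. Since each of these rings is a principal ideal domain the module theory is tractable, but determining the full set of equivalence classes requires a separate, careful analysis for every group. I expect this lattice-classification step to be the main obstacle: the rationality criterion and the identification of the groups are clean and uniform, whereas the enumeration of invariant lattices up to equivalence is precisely where the bulk of the work -- and Popov's explicit case-by-case tables -- resides.
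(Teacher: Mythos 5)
This theorem is not proved in the paper at all: it is quoted from Popov's monograph \cite[\S2.6]{Popov}, so there is no ``paper's own proof'' to compare you against --- the relevant benchmark is Popov's classification itself. With that caveat, the first half of your argument is essentially correct and is indeed the standard route to the list of groups. The equivalence (existence of a $G$-stable full lattice in $\C^n$) $\Leftrightarrow$ ($\rho_\R$ defined over $\Q$) is right, though in the converse direction it is safer to take a full lattice $L$ inside a $\Q$-form and pass to $\sum_{g\in G}gL$ (finitely generated, full rank, $G$-stable) rather than the $\Z$-span of the orbit of a single vector, which needs a small cyclicity argument to span. The Galois-theoretic reduction is also sound: rationality of $\rho\oplus\overline{\rho}$ forces the Galois orbit of $\rho$ to be contained in $\{\rho,\overline{\rho}\}$, which happens exactly when the character field $k_G$ is $\Q$ or imaginary quadratic, and conversely the Benard--Bessis Schur index theorem makes the orbit sum rational in those cases. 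Your bookkeeping of character fields is accurate, and it does reproduce exactly the list 4, 5, 8, 12, 24--26, 28, 29, 31--37.

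The genuine gap is the second sentence of the statement. The theorem asserts not merely which sporadic groups admit invariant lattices, but that their root lattices are classified up to equivalence by Popov's explicit table --- and that is precisely the part the paper uses downstream: Tables \ref{table detS=1} and \ref{table detS neq 1} and the enumeration of lattices between $\Lambda^0$ and $S^{-1}\Lambda^0$ in the proof of Proposition \ref{prop sporadic} all take Popov's table as input. Your proposal openly defers this step: you set up the right framework (modules over $\Z$, $\Z[i]$, $\Z[\zeta_3]$, $\Z[\sqrt{-2}]$, $\Z\bigl[\tfrac{1+\sqrt{-7}}{2}\bigr]$) but do not carry out the enumeration or prove completeness of Popov's list of equivalence classes. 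So what you have is a correct, self-contained proof of the first assertion together with an acknowledgment, rather than a proof, of the second. That is no worse than what the paper itself does (cite Popov), but as a blind proof of the full statement it is incomplete, and you should say explicitly that the classification part is being cited, not derived.
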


Note that Popov's notion of equivalence of $G$-invariant lattices induces isomorphisms between the corresponding abelian varieties with $G$-action, so that we only need to study abelian varieties $A=\C^n/\Lambda$ for lattices $\Lambda$ such that its root lattice $\Lambda^0$ is in Popov's table. Let us recall then another result that will be useful to classify lattices that are not a root lattice cf.~\cite[\S\S 4.2--4.4]{Popov}.

Consider the endomorphism of $\C^n$ defined as $S:=n\cdot I_n-\sum_{i=1}^n R_i$, where $R_i$ denotes the $i$-th pseudoreflection of a fixed generating set of pseudoreflections of $G$. 

\begin{theorem}[Popov]
Let $\Lambda$ be a $G$-invariant lattice in $\C^n$ and let $\Lambda^0$ be its root lattice. Then $\Lambda^0\subset\Lambda\subset S^{-1}\Lambda^0$. In particular, if $|\det(S)|=1$, then every $G$-invariant lattice is a root lattice.
\end{theorem}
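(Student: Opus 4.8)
The plan is to establish the two inclusions separately and then extract the ``in particular'' from a determinant count. The inclusion $\Lambda^0\subseteq\Lambda$ needs no work: by definition $\Lambda^0$ is generated by the sublattices $\Lambda\cap L$ with $L$ a root line, and each of these already sits inside $\Lambda$. The whole content is therefore the inclusion $\Lambda\subseteq S^{-1}\Lambda^0$, which—once we know $S$ is invertible—is the same as $S\Lambda\subseteq\Lambda^0$. I would prove this by writing $S=\sum_{i=1}^n(I_n-R_i)$ and checking that each summand already sends $\Lambda$ into $\Lambda^0$.

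Fix a generating pseudoreflection $R_i$, let $L_i$ be its root line and $H_i=\ker(I_n-R_i)$ the hyperplane it fixes pointwise. As $R_i$ has finite order it is diagonalizable, acting by a root of unity $\zeta_i\neq1$ on $L_i$ and by the identity on $H_i$; hence $\im(I_n-R_i)=L_i$. Since $\Lambda$ is $G$-invariant we have $R_i\Lambda=\Lambda$, so $(I_n-R_i)\Lambda\subseteq\Lambda$; combining the two facts gives $(I_n-R_i)\Lambda\subseteq\Lambda\cap L_i\subseteq\Lambda^0$. Summing over $i$ yields $S\Lambda\subseteq\Lambda^0$.

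It remains to see that $S$ is invertible, which is what makes $S^{-1}\Lambda^0$ meaningful and is the only non-formal step. Here I would use a $G$-invariant Hermitian inner product $\langle\,,\,\rangle$ together with the orthogonal decomposition $\C^n=L_i\oplus H_i$, with $p_i$ the projection onto $L_i$. Then $(I_n-R_i)v=(1-\zeta_i)p_iv$, so $\mathrm{Re}\,\langle Sv,v\rangle=\sum_{i=1}^n(1-\mathrm{Re}\,\zeta_i)\|p_iv\|^2\ge0$ because $|\zeta_i|=1$ and $\zeta_i\neq1$. This vanishes only when $p_iv=0$ for all $i$, i.e.\ when $R_iv=v$ for every generator $R_i$; such a $v$ is fixed by all of $G$ and hence lies in $(\C^n)^G=0$ by irreducibility. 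Thus $\mathrm{Re}\,\langle Sv,v\rangle>0$ for $v\neq0$ and $S$ is invertible, giving the chain $\Lambda^0\subseteq\Lambda\subseteq S^{-1}\Lambda^0$.

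For the ``in particular'', the same computation applied to the $G$-invariant lattice $\Lambda^0$ gives $S\Lambda^0\subseteq\Lambda^0$, so $S$ restricts to an injective endomorphism of the full-rank lattice $\Lambda^0$ (full rank because its root lines span $\C^n$ by irreducibility), with $[\Lambda^0:S\Lambda^0]=|\det S|^2$ for the complex determinant. If $|\det S|=1$ this index is $1$, forcing $S\Lambda^0=\Lambda^0$ and hence $S^{-1}\Lambda^0=\Lambda^0$; the sandwich $\Lambda^0\subseteq\Lambda\subseteq S^{-1}\Lambda^0=\Lambda^0$ then collapses to $\Lambda=\Lambda^0$. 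I expect the main obstacle to be the invertibility of $S$: the two inclusions reduce to formal manipulations with $\im(I_n-R_i)$ and $G$-invariance, whereas invertibility is precisely where the irreducibility of the representation has to be used, through the vanishing of the common fixed space of the generating reflections.
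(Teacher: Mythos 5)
Your proof is correct, but note that the paper itself contains no proof of this statement: it is quoted as a theorem of Popov, with the argument deferred to \cite[\S\S 4.2--4.4]{Popov}, so the comparison here is between your self-contained argument and a bare citation. Your route is the natural one and is sound throughout: $\Lambda^0\subseteq\Lambda$ is definitional; for $\Lambda\subseteq S^{-1}\Lambda^0$ you write $S=\sum_i(I_n-R_i)$ and combine $G$-invariance of $\Lambda$ (giving $(I_n-R_i)\Lambda\subseteq\Lambda$) with $\im(I_n-R_i)=L_i$ to land each summand in $\Lambda\cap L_i\subseteq\Lambda^0$; and the only non-formal input, invertibility of $S$, comes from $\mathrm{Re}\,\langle Sv,v\rangle=\sum_i(1-\mathrm{Re}\,\zeta_i)\|p_iv\|^2>0$ for $v\neq 0$, where irreducibility enters exactly once, via $(\C^n)^G=0$. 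Two details deserve to be made explicit. First, the identity $\langle p_iv,v\rangle=\|p_iv\|^2$ uses that $L_i$ and $H_i$ are orthogonal for the invariant form; this holds because $R_i$ is unitary for that form, so its $\zeta_i$- and $1$-eigenspaces are orthogonal. Second, in the final step you need $\Lambda^0$ to have full rank $2n$; your parenthetical reason (root lines span $\C^n$ by irreducibility) is not by itself sufficient, since one also needs each $\Lambda\cap L$ to have rank $2$ in its line --- but the cleaner fix is already in your hands: $S\Lambda\subseteq\Lambda^0$ with $S$ invertible and $\Lambda$ of full rank forces $\Lambda^0$ to have full rank. With that, $[\Lambda^0:S\Lambda^0]=|\det_\C S|^2=1$ gives $S\Lambda^0=\Lambda^0$ and the sandwich collapses to $\Lambda=\Lambda^0$; this is precisely the index identity $[S^{-1}\Lambda^0:\Lambda^0]=|\det(S)|^2$ that the paper uses later in the proof of Proposition \ref{prop sporadic}, so your argument also justifies that step.
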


All we are left to do then is to explicitly verify, for each lattice $\Lambda^0$ in Popov's list and for each $G$-invariant lattice between $\Lambda^0$ and $S^{-1}\Lambda^0$, whether the quotient of the corresponding abelian variety by $G$ is smooth or not. As it turns out, this is never true, which we summarize in the following proposition:

\begin{proposition}\label{prop sporadic}
Let $G$ be a sporadic group from the Shephard-Todd list. If $G$ acts on an abelian variety $A$ in such a way that its action on $T_0(A)$ is an irreducible representation, then $A/G$ is not smooth.
\end{proposition}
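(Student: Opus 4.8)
The plan is to convert the statement into a finite, explicit verification and then kill each case by violating the Chevalley--Shephard--Todd criterion at a single point. First I would use the three theorems of Popov recalled above to reduce to a finite list of pairs $(G,\Lambda)$. By the first theorem the only sporadic groups admitting \emph{any} invariant lattice are those numbered $4,5,8,12,24$--$26,28,29,31$--$37$, so for every other sporadic group there is no abelian variety to consider at all; note that this list is consistent with Lemma \ref{lemma Esigma Dsigma}, which forbids pseudoreflections of order $\notin\{2,3,4,6\}$ and hence rules out the groups (numbers $16$--$22$, $23$, $30$) containing order-$5$ reflections. For each remaining $G$ and each root lattice $\Lambda^0$ in Popov's table, the second and third theorems give $\Lambda^0\subseteq\Lambda\subseteq S^{-1}\Lambda^0$, so the admissible $\Lambda$ up to equivalence are the finitely many $G$-invariant subgroups of the finite group $S^{-1}\Lambda^0/\Lambda^0$ (controlled by $\det S$). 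Since Popov's equivalence induces isomorphisms of the associated abelian varieties with $G$-action, it suffices to treat one representative $A=\C^n/\Lambda$ per class, with $n$ ranging from $2$ to $8$.

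For each such pair I would apply the Chevalley--Shephard--Todd Theorem exactly as in the previous sections. Writing a lift $\tilde x\in\C^n$ of a point $x\in A$, one has $\stab_G(x)=\{g\in G\mid (1-g)\tilde x\in\Lambda\}$, and $A/G$ can be smooth only if every such stabilizer is generated by pseudoreflections. Hence to prove non-smoothness it suffices, for each pair, to exhibit one torsion vector $\tilde x$ together with an element $g\in\stab_G(x)$ fixing a subspace of codimension $\geq 2$ (so $g$ is \emph{not} a pseudoreflection, by Lemma \ref{lemma Esigma Dsigma}), and to check that the pseudoreflections lying in $\stab_G(x)$ cannot generate $g$. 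This is precisely the mechanism used for $S_{n+1}$ and for $G(m,p,n)$: one chooses the coordinates of $\tilde x$ ``different enough'' that the stabilizer is forced into a small abelian subgroup in which the obstruction is transparent.

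The search for such a bad point can be organized systematically. Picking a non-reflection $g\in G$ of order in $\{2,3,4,6\}$, the condition $(1-g)\tilde x\in\Lambda$ is a linear congruence over $\Z$ whose torsion solutions $\tilde x$ are found by Smith-normal-form computations; one then reads off $\stab_G(x)$ and its pseudoreflection subgroup. Equivalently, one may pass to the root-lattice isogeny $\pi\colon B\to A$ with $\Delta=\ker\pi$, describe the pseudoreflections of $\Delta\rtimes G$ through Lemma \ref{main lemma}, and argue as in Propositions \ref{proposition Delta not diagonal} and \ref{proposition Delta not hyperplanar}. All of these are finite computations over finite rings, carried out with the aid of SageMath.

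The main obstacle is not conceptual but one of volume and bookkeeping: the list includes the Weyl groups of $F_4,E_6,E_7,E_8$ (numbers $28,35,36,37$), of large order and dimension up to $8$, together with several genuinely complex groups each carrying more than one admissible lattice. For the crystallographic Coxeter cases a conceptual cross-check is available through Looijenga's identification of $E\otimes_\Z Q/W$ with a weighted projective space $\mathbb{P}(1,m_1,\ldots,m_n)$, the $m_i$ being the marks of the highest root; these exceed $1$ outside type $A$, so the quotient is a singular weighted projective space and non-smoothness follows at once. For the remaining complex groups I expect no such uniform statement, so the explicit point-by-point verification above is the only route, and the delicate point is simply to confirm that \emph{every} intermediate lattice between $\Lambda^0$ and $S^{-1}\Lambda^0$ has been accounted for.
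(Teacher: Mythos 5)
Your strategy is essentially the paper's own: use Popov's theorems to cut down to finitely many pairs $(G,\Lambda)$ with $\Lambda^0\subseteq\Lambda\subseteq S^{-1}\Lambda^0$, then kill each pair by exhibiting a torsion point whose stabilizer contains a non-pseudoreflection that the pseudoreflections in that stabilizer cannot generate, concluding by Chevalley--Shephard--Todd; the paper even uses the same computational tool (SageMath) and the same mechanism you describe, $\stab_G(x)=\{g\in G\mid (1-g)\tilde x\in\Lambda\}$. The caveat is that the paper's proof essentially \emph{is} Tables \ref{table detS=1} and \ref{table detS neq 1}: one explicit bad point for every lattice, including the explicitly enumerated intermediate lattices for the groups numbered $4$, $25$, $33$, $35$, $36$ where $|\det(S)|\neq 1$. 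Your proposal describes the search procedure but produces none of these points, so as written it is a correct plan rather than a completed proof; for a statement of this kind the finitely many verifications are not routine bookkeeping to be deferred, they are the content.

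Beyond that, the one step that would actually fail if taken literally is your shortcut for the Coxeter cases ($F_4,E_6,E_7,E_8$, numbers $28$, $35$--$37$). Looijenga's identification of $(E\otimes_\Z Q)/W$ with a weighted projective space (and Bernstein--Schwarzman's generalization) applies only to complex crystallographic \emph{reflection} groups, i.e.\ only when the lattice equals its own root lattice: the affine reflections inside $\Lambda\rtimes W$ generate just $\Lambda^0\rtimes W$, so when $\Lambda^0\subsetneq\Lambda$ the group $\Lambda\rtimes W$ is not a reflection group and no weighted-projective-space description of $A/W$ is available. This matters precisely for $E_6$ (no.~35) and $E_7$ (no.~36), where $|\det(S)|\neq 1$ and there are several invariant lattices strictly containing $\Lambda^0$ --- exactly the rows of Table \ref{table detS neq 1} for these groups; ``non-smoothness follows at once'' would leave all of them unproved. (Also, Popov's structures $[F_4]_2^\beta$ and $[F_4]_3^\gamma$ are not of the form $E\otimes_\Z Q$, so Looijenga's statement as you cite it does not even cover all root lattices, though Bernstein--Schwarzman does.) A minor slip in a side remark: groups $20$--$23$ and $30$ contain only reflections of order $2$ or $3$, so Lemma \ref{lemma Esigma Dsigma} does not exclude them; their exclusion, like that of $16$--$19$, genuinely requires Popov's theorem, the obstruction being the irrationality of $\sqrt 5$ rather than reflection orders.
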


\begin{proof}
For every such pair $(A,G)$, we consider the associated pair $(\Lambda,G)$, where $A=\C^n/\Lambda$. Tables \ref{table detS=1} and \ref{table detS neq 1} give, for every such pair, a point $x_0\in A$ such that its stabilizer is not generated by pseudoreflections. The result follows then from the Chevalley-Shephard-Todd theorem.\\

We start with the groups $G$ such that $|\det(S)|=1$, so that we only need to verify Popov's explicit lattices. For these, Table \ref{table detS=1} gives:
\begin{itemize}
\item The group $G$ (by giving its number in Shephard-Todd's list).
\item Popov's name for the group $\Lambda^0\rtimes G$.
\item A rational linear combination $v_0$ of the $\Z$-basis $\{e_1,\ldots,e_{2n}\}$ of $\Lambda=\Lambda^0$.
\item The order of the stabilizer $S_0=\stab_G(x_0)$ of the image $x_0$ of $v_0$ in the abelian variety $A=\C^n/\Lambda$.
\item The order of the subgroup $P_0$ of $S_0$ that is generated by pseudoreflections.
\end{itemize}
We refer to \cite[\S2.6, pp.~37--44]{Popov} for the explicit $\Z$-basis. In each case, the first $n$ elements of the basis are Popov's $e_1,\ldots,e_n$ and the $(n+i)$-th element is $\tau_i e_i$ for some explicit $\tau_i\in\C$.

\begin{table}[h!]
\[
\begin{array}{|c|c|c|c|c|}\hline
\# G & \Lambda^0\rtimes G & v_0\in\Lambda^0\otimes_\Z \Q & |S_0| & |P_0| \\[.3em] \hline \hline
5 & [K_{5}] & (\frac13,\frac13,\frac13,0) & 3 & 1 \\[.3em] \hline
8 & [K_{8}] & (\frac13,\frac13,\frac13,0) & 3 & 1 \\[.3em] \hline
12 & [K_{12}] & (0,0,0,\frac12) & 16 & 8 \\[.3em] \hline
24 & [K_{24}] & (\frac14, -\frac14, -\frac14, \frac12, \frac14, -\frac14) & 4 & 1 \\[.3em] \hline
26 & [K_{26}]_1 & (\frac12, \frac12, \frac12, 0, \frac12, \frac12) & 36 & 18 \\[.3em] \hline
26 & [K_{26}]_2 & (0, 0, -\frac13, 0, 0, \frac13) & 72 & 24 \\[.3em] \hline
28 & [F_{4}]_1^\alpha & (\frac12, \frac12, \frac12, \frac12, \frac12, 0, 0, 0) & 12 & 6 \\[.3em] \hline
28 & [F_{4}]_2^\beta & (0,\frac12,\frac12,0,0,0,\frac12,0) & 16 & 8 \\[.3em] \hline
28 & [F_{4}]_3^\gamma & (0,\frac12,0,0,0,0,\frac12,0) & 16 & 8 \\[.3em] \hline
29 & [K_{29}] & (\frac12, 0, 0, 0, \frac12, 0, 0, 0) & 768 & 384 \\[.3em] \hline
31 & [K_{31}] & (\frac12, \frac12, 0, 0, 0, 0, 0, 0) & 384 & 192 \\[.3em] \hline
32 & [K_{32}] & (\frac12, 0, 0, 0, 0, 0, 0, 0) & 1296 & 648 \\[.3em] \hline
34 & [K_{34}] & (\frac13, 0, 0, 0, 0, 0, -\frac13, 0, 0, 0, 0, 0) & 155520 & 51840 \\[.3em] \hline
37 & [E_8]^\alpha & (\frac12, \frac12, \frac12, \frac12, \frac12, \frac12, \frac12, 0, \frac12, \frac12, 0, 0, 0, 0, 0, \frac12) & 103680 & 51840 \\[.3em] \hline
\end{array}
\]
\caption{Examples of non-smooth points in $A/G$ for sporadic groups $G$ such that $|\det(S)|=1$.}\label{table detS=1}
\end{table}

We consider now those groups $G$ in Popov's table for which $|\det(S)|\neq 1$, so that we need to check for new lattices aside from Popov's. These correspond to the numbers 4, 25, 33, 35 and 36 in Shephard-Todd's list. Since we always have $\Lambda^0\subset\Lambda\subset S^{-1}\Lambda^0$ and $[S^{-1}\Lambda^0:\Lambda^0]=|\det(S)|^2$, we see that there are finitely many other lattices to look at. Actually, in all five cases we get that the action of $G$ on the quotient $S^{-1}\Lambda^0/\Lambda^0$ is trivial, so that every lattice in between is a $G$-invariant lattice and needs to be considered. We keep then notations as above (in particular, Popov's $\Z$-basis is given by $\{e_1,\ldots,e_{2n}\}$) and we go case by case:\\

In case 4, a $\Z$-basis for $S^{-1}\Lambda^0$ is given by $\{d_1,d_2,e_3,e_4\}$, where $d_1=\frac12 e_1+\frac12 e_2+\frac12 e_3$ and $d_2=\frac12 e_1+\frac12 e_4$. In particular, we see that the quotient $S^{-1}\Lambda^0/\Lambda^0$ is a Klein group and thus, apart from $S^{-1}\Lambda^0$, we have 3 new lattices to consider: $\Lambda_1:=\langle d_1,\Lambda^0\rangle$, $\Lambda_2:=\langle d_2,\Lambda^0\rangle$, $\Lambda_3:=\langle d_1+d_2,\Lambda^0\rangle$.

In case 25, a $\Z$-basis for $S^{-1}\Lambda^0$ is given by $\{d_1,e_2,\ldots,e_6\}$, where $d_1=\frac13 e_1+\frac13 e_3+\frac23 e_4+\frac23 e_6$. Since the index is 3, this is the only new lattice that needs to be checked.

In case 33, a $\Z$-basis for $S^{-1}\Lambda^0$ is given by $\{d_1,e_2,\ldots,e_5,d_6,e_7\ldots,e_{10}\}$, where $d_1=\frac12 e_1+\frac12 e_3+\frac12 e_5$ and $d_6=\frac12 e_6+\frac12 e_8+\frac12 e_{10}$. In particular, we see that the quotient $S^{-1}\Lambda^0/\Lambda^0$ is a Klein group and thus, apart from $S^{-1}\Lambda^0$, we have 3 new lattices to consider: $\Lambda_1:=\langle d_1,\Lambda^0\rangle$, $\Lambda_2:=\langle d_6,\Lambda^0\rangle$, $\Lambda_3:=\langle d_1+d_6,\Lambda^0\rangle$.

In case 35, a $\Z$-basis for $S^{-1}\Lambda^0$ is given by $\{d_1,e_2,\ldots,e_6,d_7,e_8\ldots,e_{12}\}$, where $d_1=\frac13 e_1-\frac13 e_3+\frac13 e_5-\frac13 e_6$ and $d_7=\frac13 e_7-\frac13 e_{9}+\frac13 e_{11}-\frac13 e_{12}$. In particular, we see that the quotient $S^{-1}\Lambda^0/\Lambda^0$ is isomorphic to $(\Z/3\Z)^2$ and thus, apart from $S^{-1}\Lambda^0$, we have 4 new lattices to consider: $\Lambda_1:=\langle d_1,\Lambda^0\rangle$, $\Lambda_2:=\langle d_7,\Lambda^0\rangle$, $\Lambda_3:=\langle d_1+d_7,\Lambda^0\rangle$, $\Lambda_4:=\langle d_1+2d_7,\Lambda^0\rangle$.

In case 36, a $\Z$-basis for $S^{-1}\Lambda^0$ is given by $\{e_1,d_2,e_3,\ldots,e_8,d_9,e_{10},\ldots,e_{14}\}$, where $d_2=\frac12 e_2+\frac12 e_5+\frac12 e_7$ and $d_9=\frac12 e_9+\frac12 e_{12}+\frac12 e_{14}$. In particular, we see that the quotient $S^{-1}\Lambda^0/\Lambda^0$ is a Klein group and thus, apart from $S^{-1}\Lambda^0$, we have 3 new lattices to consider: $\Lambda_1:=\langle d_2,\Lambda^0\rangle$, $\Lambda_2:=\langle d_9,\Lambda^0\rangle$, $\Lambda_3:=\langle d_2+d_9,\Lambda^0\rangle$.\\

Table \ref{table detS neq 1} gives then, for every pair $(A,G)$ with $A=\C^n/\Lambda$:
\begin{itemize}
\item The group $G$ (by giving its number in Shephard-Todd's list).
\item The corresponding lattice $\Lambda$ (as we named them here above).
\item A rational linear combination $v_0$ of the corresponding $\Z$-basis (as given here above).
\item The order of the stabilizer $S_0=\stab_G(x_0)$ of the image $x_0$ of $v_0$ in the abelian variety $A$.
\item The order of the subgroup $P_0$ of $S_0$ that is generated by pseudoreflections.
\end{itemize}
This concludes the proof of Proposition \ref{prop sporadic}.
\end{proof}

\begin{rem}
For each lattice $\Lambda$ and each ``bad'' element $x_0$ analyzed here above, we computed the stabilizer $S_0$ and its subgroup $P_0$ by brute force using basic SageMath algorithms (we thank once again Antonio Behn for his enormous help in optimizing our first algorithms). Since these are really basic, readers can certainly write their own (and probably in a more efficient manner than ours!). However, for those who would like to look at our code, it is presented in an appendix to a previous version of this article\\
(cf.~\texttt{arxiv.org/abs/1801.00028v2}).

The main idea in order to find these elements was to check the stabilizers (and the pseudoreflections therein) of small torsion elements chosen via the following principle: for every element $g$ of the matrix group $G$, we decomposed $\mathbb{Z}^{2n}$ as $\ker(g-I_{2n})\oplus \ker(g-I_{2n})^\perp$, where the $\perp$ is taken with respect to a $G$-invariant Hermitian form $H$ on $\mathbb{C}^n$. By restricting $g$ to $\ker(g-I_{2n})^\perp$, we obtain an integer-valued matrix $\tilde{g}$ such that $\tilde{g}-I$ is invertible over $\mathbb{Q}$. The columns of $(\tilde{g}-I)^{-1}$ that contain rational, non-integer numbers therefore correspond to fixed points of $g$ in $A$ that do not come from the eigenspace associated to 1 of $g$. These were the vectors whose stabilizers we calculated and analyzed.
\end{rem}

\vfill

\begin{table}[h!]
\[
\begin{array}{|c|c|c|c|c|}\hline
\# G & \Lambda & v_0\in\Lambda\otimes_\Z \Q & |S_0| & |P_0| \\[.3em] \hline \hline
4 & \Lambda^0 & (\frac12, 0, 0, 0) & 2 & 1 \\[.3em] \hline
4 & \Lambda_1 & (0, \frac12, 0, 0) & 4 & 1 \\[.3em] \hline
4 & \Lambda_2 & (0, 0, \frac12, \frac12) & 4 & 1 \\[.3em] \hline
4 & \Lambda_3 & (0, \frac12, \frac12, 0) & 6 & 3 \\[.3em] \hline
4 & S^{-1}\Lambda^0 & (0, 0, 0, \frac12) & 8 & 1 \\[.3em] \hline\hline
25 & \Lambda^0 & (0, -\frac13, 0, 0, \frac13, -\frac13) & 3 & 1 \\[.3em] \hline
25 & S^{-1}\Lambda^0 & (0, 0, 0, \frac13, 0, \frac13) & 72 & 24 \\[.3em] \hline\hline
33 & \Lambda^0 & (0, \frac12, 0, \frac12, \frac12, \frac12, 0, \frac12, 0, \frac12) & 108 & 54 \\[.3em] \hline
33 & \Lambda_1 & (\frac12, 0, 0, 0, 0, 0, 0, 0, 0, 0) & 1296 & 648 \\[.3em] \hline
33 & \Lambda_2 & (0, 0, 0, 0, 0, \frac12, 0, 0, 0, 0) & 1296 & 648 \\[.3em] \hline
33 & \Lambda_3 & (\frac12, 0, 0, 0, 0, \frac12, 0, 0, 0, 0) & 240 & 120 \\[.3em] \hline
33 & S^{-1}\Lambda^0 & (\frac12, 0, 0, 0, 0, \frac12, 0, 0, 0, 0) & 1296 & 648 \\[.3em] \hline\hline
35 & \Lambda^0 & (0, \frac12, 0, \frac12, \frac12, \frac12, 0, \frac12, 0, 0, 0, 0) & 72 & 36 \\[.3em] \hline
35 & \Lambda_1 & (0, \frac13, \frac13, 0, \frac13, 0, 0, 0, 0, 0, 0, 0) & 648 & 216 \\[.3em] \hline
35 & \Lambda_2 & (0, 0, 0, 0, 0, 0, 0, \frac13, \frac13, 0, \frac13, 0) & 648 & 216 \\[.3em] \hline
35 & \Lambda_3 & (0, \frac13, \frac13, 0, \frac13, 0, 0, \frac13, \frac13, 0, \frac13, 0) & 648 & 216 \\[.3em] \hline
35 & \Lambda_4 & (0, \frac13, \frac13, 0, \frac13, 0, 0, -\frac13, -\frac13, 0, -\frac13, 0) & 648 & 216 \\[.3em] \hline
35 & S^{-1}\Lambda^0 & (0, \frac13, \frac13, 0, \frac13, 0, 0, \frac13, \frac13, 0, \frac13, 0) & 648 & 216 \\[.3em] \hline\hline
36 & \Lambda^0 & (\frac12, \frac12, \frac12, \frac12, \frac12, \frac12, \frac12, \frac12, \frac12, 0, 0, 0, 0, \frac12) & 1440 & 720 \\[.3em] \hline
36 & \Lambda_1 & (0, \frac12, 0, 0, 0, 0, 0, 0, 0, 0, 0, 0, 0, 0) & 103680 & 51840 \\[.3em] \hline
36 & \Lambda_2 & (0, 0, 0, 0, 0, 0, 0, 0, \frac12, 0, 0, 0, 0, 0) & 103680 & 51840 \\[.3em] \hline
36 & \Lambda_3 & (0, \frac12, 0, 0, 0, 0, 0, 0, \frac12, 0, 0, 0, 0, 0) & 3840 & 1920 \\[.3em] \hline
36 & S^{-1}\Lambda^0 & (0, \frac12, 0, 0, 0, 0, 0, 0, \frac12, 0, 0, 0, 0, 0) & 103680 & 51840 \\[.3em] \hline
\end{array}
\]
\caption{Non-smooth points in $A/G$ for sporadic groups $G$ such that $|\det(S)|\neq 1$.}\label{table detS neq 1}
\end{table}


\begin{thebibliography}{ABCD00}

\bibitem[Auf17]{Auff} R.~Auffarth. \textit{A note on Galois embeddings of abelian varieties}. Manuscripta Math.~154(3-4), 2017, pp.~279--284.

\bibitem[ALAQ]{Pablo} R.~Auffarth, G.~Lucchini Arteche, P,~Quezada. \textit{Smooth quotients of abelian surfaces by finite groups.} Preprint, 2018.

\bibitem[ALR17]{ALR} R.~Auffarth, H.~Lange, A.~Rojas. \textit{A criterion for an abelian variety to be non-simple}. J.~Pure Appl.~Algebra 221(8), 2017, pp.~1906--1925.

\bibitem[BS06]{BS} J.~Bernstein, O.~Schwarzman. \textit{Chevalley's theorem for the complex crystallographic groups}. J.~Nonlinear Math.~Phys.~13(3), 2006, pp.~323--351.

\bibitem[IL15]{IL} B.~Im, M.~Larsen. \textit{Rational curves on quotients of abelian varieties by finite groups}. Math.~Res.~Lett.~22(4), 2015, pp.~1145--1157.

\bibitem[KL09]{KL} J.~Koll\'ar, M.~Larsen. \textit{Quotients of Calabi-Yau varieties}, in \textit{Algebra, arithmetic, and geometry: in honor of Yu.~I.~Manin. Vol.~II}, Progr.~Math.~270, \textit{Birkhauser Boston Inc., Boston, MA}, 2009, pp.~179--211.

\bibitem[Loo76]{Looijenga} E.~Looijenga. \textit{Root Systems and Elliptic curves}. Invent.~Math.~38(1), 1976, pp.~17--32.

\bibitem[PVdB]{PVdB} A.~Polishchuk, M.~Van den Bergh \textit{Semiorthogonal decompositions of the categories of equivariant coherent sheaves for some reflection groups.} To appear in Journal of the European Mathematical Society.

\bibitem[Pop82]{Popov} V.~Popov. \textit{Discrete complex reflection groups.} Communications of the Mathematical Institute, Rijksuniversiteit Utrecht, 15. \textit{Rijksuniversiteit Utrecht, Mathematical Institute, Utrecht}, 1982, 89 pp.

\bibitem[PZ06]{PopovZ} V.~Popov. Y.~Zarhin. \textit{Finite linear groups, lattices, and products of elliptic curves}. J.~Algebra 305, 2006, pp.~562--576.

\bibitem[ST54]{ST} G.~Shephard, J.~Todd. \textit{Finite unitary reflection groups}. Canadian J.~Math. 6, 1954, pp.~274--304.

\bibitem[TY82]{TY} S.~Tokunaga, M.~Yoshida. \textit{Complex crystallographic groups.~I.} J.~Math.~Soc.~Japan 34(4), 1982, pp.~581--593.

\bibitem[Yos07]{Yoshi} H.~Yoshihara. \textit{Galois embedding of algebraic variety and its application to abelian surface}. Rend.~Semin.~Mat.~Univ.~Padova 117, 2007, pp.~69-85. 

\end{thebibliography}
\end{document}